\newtheoremstyle{mytheoremstyle} % name
{\topsep}                    % Space above
    {\topsep}                    % Space below
    {}                   % Body font
    {}                           % Indent amount
    {\bf}                   % Theorem head font
    {.}                          % Punctuation after theorem head
    {.5em}                       % Space after theorem head
    {}  % Theorem head spec (can be left empty, meaning ?normal?)
\theoremstyle{mytheoremstyle}
\theoremstyle{definition}
\newcommand{\RR}{\mathbb{R}}
\newcommand{\NN}{\mathbb{N}}
\theoremstyle{remark}
\newcommand{\dt}{\ {\rm d} t }
\newcommand{\dint}{{\displaystyle \int}}
\theoremstyle{plain}
\newtheorem{theorem}{Theorem}[section]
\newcommand{\address}[1]{\gdef\@address{#1}}
\newcommand{\email}[1]{\gdef\@email{\url{#1}}}
\newcommand{\@endstuff}{\par\vspace{\baselineskip}\noindent\small
\begin{tabular}{@{}l}\scshape\@address\\\textit{E-mail address:} \@email\end{tabular}}
\title{Diophantus Equations and Partially Ordered Sets}
\author{Addea Gupta}
\address{Sanskriti School, Chanakyapuri, New Delhi, 110021}
\email{addeagupta@gmail.com}
\date{\vspace{-5ex}}
\begin{document}

\maketitle
%\subjclass[2010]{06A06}
%\keywords{Diophantine Equations}
\begin{abstract}
In \cite{(k!)^n} it is shown that the Diophantine equation $(k!)^n+k^n=(n!)^k+n^k$ only has the trivial solution $n=k$, and  $(k!)^n-k^n=(n!)^k-n^k$ only has the solutions $n=k$, $(n, k)=(1, 2),$ and $(2, 1)$. In this article we find all solutions of the Diophantine Equations $a_1!a_2!\cdots a_n! \pm a_1a_2 \cdots a_n = b_1!b_2! \cdots b_k! \pm b_1b_2 \cdots b_k$, where $a_i$ majorizes $b_i$. Furthermore we find a sufficient condition on a function $f:\NN\to\RR^+$ to guarantee that $f$ gives a monotone function on the POSET of all finite sequences of natural numbers. We then use that to solve other Diophantine equations involving factorials and generalize the results of \cite{k^(k-1)}. We also explore similar Diophantine Equations for the Fibonacci Sequence and  other sequences of natural numbers given by linear recursions of the form $A_{n+2}=aA_{n+1}+bA_{n}$. \\ \\
{2010 \textit{Mathematics Subject Classification.} 06A06,11D72,11B65,33B15.\\
\textit{Keywords and phrases.} Diophantine Equations, Partially Ordered Sets, Factorials, Fibonacci Sequence.}
\end{abstract}

\section{Introduction}

In \cite{(k!)^n} the authors prove if $(k!)^n+k^n=(n!)^k+n^k$, then $n=k$, and if $(k!)^n-k^n=(n!)^k-n^k$, then $n=k$ or $(n, k)=(1, 2), $ or $(2, 1)$. The idea of the proof is to use monotonicity of sequences $\sqrt[n]{n!}$ and $\sqrt[n]{n}$ to obtain the result. We generalize this result by first turning the set $S$ of all finite sequences of positive integers into a Partially Ordered Set using majorization. A sequence of positive integers $(a_1, a_2, \ldots, a_n)$ majorizes a sequence of positive integers $(b_1, b_2, \ldots, b_k)$ whenever all of the following holds:
\begin{itemize}
\item $n\leq k$, and 
\item For every $i\leq n$, $a_1+\cdots+a_i\geq b_1+\cdots+b_i$.
\item $a_1+\cdots+a_n\geq b_1+\cdots+b_k.$
\end{itemize}
In which case we write $(a_1, a_2, \ldots, a_n)\succ (b_1, b_2, \ldots, b_k).$\\

As a result we are able to solve similar yet more general Diophantine equations. For example  we prove that for finite sequences of positive integers $(a_1, \ldots, a_n)$ and $(b_1, \ldots, b_k)$ where $(a_1, \ldots, a_n)\succ (b_1, \ldots, b_k)$, then  $a_1!a_2!\cdots a_n! + a_1a_2 \cdots a_n = b_1!b_2! \cdots b_k! + b_1b_2 \cdots b_n$ implies $n=k$ and $a_j=b_j$ for all $j$. We also show that if $a_1!a_2!\cdots a_n! - a_1a_2 \cdots a_n = b_1!b_2! \cdots b_k! - b_1b_2 \cdots b_n$, then either $a_j=b_j$ for all $j$ or $a_j, b_j\in \{1, 2\}$ for all $j$. Setting $a_i=k$ and $b_i=n$, we obtain the main results proved in \cite{(k!)^n}.\\

We then find a sufficient condition on a function $f:\mathbb{Z}^+\to\mathbb{R}^+$ to impose a monotone function on $S$.  As our main result we prove the following theorem:\\

{\bf Theorem A.} Suppose $f:\mathbb{N}\longrightarrow \mathbb{R}^+ $ is a function satisfying:
\begin{itemize}
\item $f(0)=1$, and
\item  $\dfrac{f(x)}{f(x-1)}$ is strictly increasing (resp. decreasing)\end{itemize}
Then the following holds:\\
If $(a_1,\cdots,a_n) \succ (b_1,\cdots,b_k)$ for two sequences of positive integers, then $f(a_1)\cdots f(a_n) \geq f(b_1)\cdots f(b_k)$ (resp. $f(a_1)\cdots f(a_n) \leq f(b_1)\cdots f(b_k)$). Equality holds iff $k=n$ and $a_i=b_i$ for all $i$.

We apply the above theorem to appropriate functions to deduce some results of  \cite{k^(k-1)}. For instance we prove that the only solutions to all of the following Diophantine equations $$\begin{array}{c}(k!)^n n^{kn}=(n!)^k k^{kn},\\ \\ \left( {\dfrac {k^{(k-1)}}{(k-1)!}}\right)^{n(n-1)} = \left( {\dfrac {n^{(n-1)}}{(n-1)!}}\right)^{k(k-1)}, \text{and} \\
\\
\left(\dfrac{k^{k^2-1}}{(k-1)!}\right)^{n(n-1)}=\left(\dfrac{n^{n^2-1}}{(n-1)!}\right)^{k(k-1)}\end{array}$$ are $k=n$.

We will then prove a theorem similar to Theorem A for sums as follows.\\

{\bf Theorem B.} Suppose $f:\mathbb{N}\longrightarrow \mathbb{R}$ is a function satisfying:
\begin{itemize}
\item $f(0)=0$, and
\item  $f(x)-f(x-1)$ is strictly increasing (resp. decreasing).\end{itemize}
Then the following holds:\\
If $(a_1,\cdots,a_n) \succ (b_1,\cdots,b_k)$ for two sequences of positive integers, then $f(a_1)+\cdots+f(a_n) \geq f(b_1)+\cdots+f(b_k)$ (resp. $f(a_1)+\cdots+f(a_n) \leq f(b_1)+\cdots+f(b_k)$). Equality holds iff $k=n$ and $a_i=b_i$ for all $i$.

This theorem is then used to generalize two of the other results of \cite{k^(k-1)}  as follows:

Suppose $(a_1, a_2,\ldots, a_n)\succ (b_1, b_2, \ldots, b_k)$, then the only solutions to the following Diophantine equations are $a_i=b_i$, and $k=n$.
$$\begin{array}{c}\sum\limits_{i=1}^n ((a_i+1)!)^{1/(a_i+2)}= \sum\limits_{i=1}^k ((b_i+1)!)^{1/(b_i+2)}\\ \sum\limits_{i=1}^n ((a_i+2)!)^{1/(a_i+2)}= \sum\limits_{i=1}^k ((b_i+2)!)^{1/(b_i+2)}  \end{array}$$

Recall that the Gamma function is defined by $\Gamma(x)=\dint_0^\infty e^{-t}t^{x-1}\dt$. It is well-known that for every $x>0$ we have $\Gamma(x+1)=x\Gamma(x)$. We will also make use of the following properties of the Gamma function that follow from Lemma 3 of \cite{k^(k-1)}:\\ $\ln(x-1)<\psi(x)<\ln(x)$, and $x\ln x-x+1<\ln\Gamma(x+1)<(x+1)\ln(x+1)-x$, where $\psi(x)$ denotes Euler's Digamma Function and $\psi(x)=\Gamma'(x)/\Gamma(x)$\\

{\bf Theorem C.} For every real number $x>1$ we have the following:
\begin{itemize}
    \item $\ln (\Gamma(x))>(x-0.5)\ln x -x$.
    \item $\dfrac{\Gamma'(x)}{\Gamma(x)}<\ln x.$
\end{itemize}

{\bf Definition.} A sequence of positive integers $a_1\cdots a_n$ is said to satisfy the uniqueness property if the only solution to $a_{n_1}\cdots a_{n_k}= a_{m_1}\cdots a_{m_l}$, where $n_1>\cdots >n_k$, and $m_1>\cdots >m_\ell$ and $(n_1,\ldots , n_k) \succ (m_1,\ldots ,m_\ell)$ then $k=\ell$ and $m_i=n_i$ for all $i$.\\

Throughout this paper, $F_n$ denotes the Fibonacci sequence defined recursively by $F_0=1, F_1=1,$ and $F_{n+2}=F_{n+1}+F_n$ for all $n\geq 0.$ It is well-known that $F_n= \dfrac{\alpha^{n+1}-\beta^{n+1}}{\alpha-\beta}$, where $\alpha,\beta$ are roots of $x^2-x-1=0$.

\section{Main Results}

\begin{theorem}\label{majorizing} Let $a_1, \ldots, a_n$ and $b_1,\ldots, b_k$ be sequences of positive integers for which  $(a_{1},...,a_{n})\succ(b_{1},...,b_{k})$. Then $a_{1}!\cdots a_{n}!\geq b_1!\cdots b_k!$. Furthermore, equality holds if and only if $n=k$ and $a_j=b_j$ for all $j$.
\end{theorem}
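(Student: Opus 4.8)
The plan is to induct on the length $n$ of the majorizing sequence, or equivalently to reduce the general majorization relation to a sequence of elementary moves, each of which manifestly does not increase the product of factorials. First I would record the base observation that the factorial function is ``super-multiplicative'' in the relevant sense: for positive integers $x\ge y\ge 1$ one has $x!\,(y-1)!\ge (x+1)!\,(y-2)!$ when $y\ge 2$... wait, more precisely the inequality I actually want is $ (x+1)!\,(y-1)! \ge x!\,y!$ whenever $x\ge y$, which rearranges to $x+1\ge y$, true; and this is strict unless... it is never equality for $x\ge y\ge 1$ since $x+1>y$ always (strict inequality $x+1 > y$ holds as soon as $x \ge y$). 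This single ``shifting mass toward the front'' step is the engine: if I can move a unit from a later, smaller coordinate to an earlier, larger coordinate, the product of factorials strictly increases.

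The key steps, in order: (1) Show that if $(a_1,\dots,a_n)\succ(b_1,\dots,b_k)$ then by appending zeros... no — instead, pad $(b_1,\dots,b_k)$ is not allowed since entries must be positive, so I would argue directly. I would first treat the case $n=k$: here majorization means the partial sums dominate and the total sums satisfy $\sum a_i \ge \sum b_i$. Classical majorization theory (Muirhead-type reasoning, the ``Robin Hood'' / Hardy–Littlewood–Pólya transfer argument) says $(a_i)$ can be obtained from $(b_i)$ by a finite sequence of steps that each increase a larger coordinate by $1$ and decrease a smaller coordinate by $1$ (staying within nonnegative integers), possibly preceded by increasing some coordinates outright to account for $\sum a_i>\sum b_i$. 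Increasing any coordinate only multiplies the factorial product by a factor $\ge 1$ (strictly $>1$ if that coordinate was $\ge 1$). Each transfer step $(\dots,p,\dots,q,\dots)\mapsto(\dots,p+1,\dots,q-1,\dots)$ with $p\ge q$ multiplies the product by $(p+1)/q\ge 1$, and equals $1$ only if $p+1=q$, i.e. $p=q-1<q$, contradicting $p\ge q$; so every genuine transfer is strict. Hence $\prod a_i! \ge \prod b_i!$ with equality iff no step was needed, i.e. $(a_i)=(b_i)$.

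For the case $n<k$: I would reduce to the equal-length case by ``compressing'' the tail of $(b_1,\dots,b_k)$. The idea is to replace $(b_1,\dots,b_k)$ by $(b_1,\dots,b_{n-1}, b_n+b_{n+1}+\cdots+b_k)$, call it $(b_1',\dots,b_n')$. First, $(a_1,\dots,a_n)\succ(b_1',\dots,b_n')$: the first $n-1$ partial sums are unchanged, and the $n$-th partial sum of the $b'$ sequence equals the full sum $b_1+\cdots+b_k$, which is $\le a_1+\cdots+a_n$ by the third majorization condition. Second, combining entries only decreases the factorial product: $x!\,y!\le (x+y)!$ for positive integers, with equality iff $\min(x,y)\le 1$... actually $x!\,y!=(x+y)!$ forces $\binom{x+y}{x}=1$, i.e. $x=0$ or $y=0$, impossible for positive integers, so combining two positive entries into one is always \emph{strict}: $b_1'!\cdots b_n'! > b_1!\cdots b_k!$. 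Therefore $\prod a_i! \ge \prod b_i'! > \prod b_j!$, so the inequality is strict whenever $n<k$, and in particular equality in the theorem forces $n=k$, whereupon the equal-length analysis finishes the job.

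The main obstacle I anticipate is making the Hardy–Littlewood–Pólya transfer argument fully rigorous in this integer, possibly-unequal-total-sum setting — i.e. proving cleanly that $(a_i)$ with $\sum a_i \ge \sum b_i$ and dominating partial sums is reachable from $(b_i)$ by increments-at-the-front followed by Robin-Hood transfers, each of which is a larger-coordinate-up/smaller-coordinate-down move that keeps entries nonnegative. One clean way to sidestep induction bookkeeping is a direct minimal-counterexample argument: among all pairs violating the claim, pick one minimizing $\sum_i |a_i - b_i|$ (after the length reduction so $n=k$); if $a=b$ we are done, otherwise there is a least index $i$ with $a_i\neq b_i$, necessarily $a_i>b_i$ (by the partial-sum condition at $i$ together with agreement before $i$... careful: need $a_i \ge b_i$; if $a_i<b_i$ then $\sum_{j\le i}a_j<\sum_{j\le i}b_j$, contradiction), and a later index $j$ with $a_j<b_j$ (unless $\sum a>\sum b$, handled separately by first decrementing one $a$), and then the move $b_i\mapsto b_i+1$, $b_j\mapsto b_j-1$ produces a sequence still majorized by $a$, still positive or zero, with strictly smaller discrepancy and factorial product strictly larger than $\prod b!$, contradicting minimality. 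I would present the proof in this streamlined form, with the strict super-multiplicativity lemma stated once up front.
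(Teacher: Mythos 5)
Your equal-length argument is essentially sound (for sequences sorted in decreasing order, which is what the paper implicitly works with): the unit-transfer lemma plus a minimal-counterexample or Robin Hood induction does prove the $n=k$ case, provided you take $j$ to be the \emph{least} later index with $a_j<b_j$ (otherwise the intermediate partial-sum inequalities $\sum_{m\le \ell}b'_m\le\sum_{m\le\ell}a_m$ for $i\le \ell<j$ need not hold) and you allow a temporary zero entry with $0!=1$. The genuine problem is your reduction from $n<k$ to $n=k$ by compressing the tail of $b$. Replacing $(b_1,\dots,b_k)$ by $(b_1,\dots,b_{n-1},b_n+\cdots+b_k)$ \emph{increases} the factorial product, and the resulting sequence is generally no longer decreasing; for non-monotone sequences the equal-length inequality is simply false, so the link $\prod a_i!\ge\prod b'_i!$ in your chain can fail. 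Concretely: $(3,3)\succ(2,2,2)$ and indeed $3!\,3!=36\ge 2!\,2!\,2!=8$, but your compression gives $b'=(2,4)$, and $3!\,3!=36<2!\,4!=48$. Your own transfer engine diagnoses the failure: getting from $(2,4)$ to $(3,3)$ requires moving a unit from the \emph{larger} entry to the \emph{smaller} one, which multiplies the product by $3/4<1$. So the compression step overshoots, and equality-forces-$n=k$ does not follow from it.

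For contrast, the paper avoids this entirely by running a single induction on $\sum a_j+\sum b_j$ and peeling off mass from the \emph{ends}: if some proper partial sum of $a$ meets the corresponding partial sum of $b$, split both sequences there and apply the hypothesis to each half; otherwise decrement the last of the maximal $a_i$'s and simultaneously decrement $b_k$ (or delete $b_k$ when $b_k=1$), then restore the lost factors using $a_1\ge b_1\ge b_k$. That is the step that correctly absorbs the length discrepancy $k>n$, one unit of $b_k$ at a time, and it is what your argument is missing. A repairable version of your approach would handle $n<k$ by deleting or decrementing the \emph{smallest} entry $b_k$ rather than merging the tail into one large entry.
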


\begin{proof} We will prove the statement by induction on $\sum\limits_{j=1}^{n} a_{j} + \sum\limits_{j=1}^k b_j$.

{\it Basis step}: If $\sum\limits_{j=1}^n a_j + \sum\limits_{j=1}^{k} b_{j} = 2$, then $n=k=1$ and $a_1=b_1=1$, and the claim clearly holds. \\

{\it Inductive step}: Suppose $(a_1,...,a_n)\succ(b_1,...,b_k)$. We will consider two cases:\\ 

{\it Case 1}: There is $1\leq i < n$ such that $a_{1}+\cdots+a_{i} = b_{1}+\cdots+b_i$. By assumption $(a_{1},\ldots,a_{i})\succ(b_{1},\ldots,b_i)$. Thus, by inductive hypothesis $$a_{1}!\cdots a_{i}! \geq b_{1}!\cdots b_{i}!\; (*).$$ 

We claim that $(a_{i+1},...,a_{n})\succ(b_{i+1},...,b_{k})$. \\

Note that since  $(a_{1},...,a_{n})\succ(b_{1},...,b_{k}),$ for every $i<\ell\leq n$ we have $\sum\limits_{j=1}^\ell a_j \geq \sum\limits_{j=1}^\ell b_j$.  Since $a_{1}+\cdots+a_{i} = b_{1}+\cdots+b_i$, we obtain $\sum\limits_{j=i+1}^\ell a_j \geq \sum\limits_{j=i+1}^\ell b_j$. This completes the proof of the claim.\\

By inductive hypothesis $a_{i+1}!\cdots a_n! \geq b_{i+1}!\cdots b_k!$. Multiplying this with $(*)$ we obtain  the result.\\ 

Now, suppose $a_1!\cdots a_n!=b_1!\cdots b_k!$. By what we proved above we must have $a_{1}!\cdots a_i! = b_{1}!\cdots b_{i}!$ and $a_{i+1}!\cdots a_{n}! = b_{i+1}!\cdots b_{k}!$ By inductive hypothesis $k=n$ and $a_j=b_j$ for all $j$.\\

{\it Case 2}: $a_{1}+\cdots +a_{i} > b_{1}+\cdots +b_{i}$ for all $i$ with $1\leq i<n.$ Assume $a_1=\cdots=a_j > a_{j+1} \geq \cdots \geq a_{n}$. Note that if $a_1=\cdots=a_n$ then we set $j=n$.\\ 

If $b_k>1$ then, $(a_1,\cdots,a_{j-1},a_j-1,a_{j+1}\cdots a_{n}) \succ 
(b_1,\ldots b_{k-1},b_{k}-1)$.\\ By inductive hypothesis $$a_1!\cdots a_{j-1}!(a_{j}-1)!\cdots a_n! \geq b_{1}!\cdots b_{k-1}!(b_{k}-1)!$$ Since $a_{1}=a_{j}\geq b_{1}\geq b_{k}$, we have $a_{1}!\cdots a_{n}! \geq b_1!\cdots b_{k}!$. If the equality holds, then we must have $a_1=b_1$. By assumption of this case we must have $n=1$. However, since $a_1\geq b_1+\cdots+b_k,$ we must have $k=1$ as well, and thus $n=k=1$ and $a_1=b_1,$ as desired.\\

Now suppose $b_k=1$. We see that  $(a_1\cdots a_{j-1},a_j-1,a_{j+1}\cdots a_n) \succ 
(b_{1}\cdots b_{k-1})$. Thus, $a_{1}!\cdots a_{j-1}!(a_{j}-1)!\cdots a_{n}! \geq b_1!\cdots b_{k-1}!$. Since $a_{1}=a_{j} \geq b_{1} \geq b_{k}=b_{k}!$ multiplying the two inequalities yields $a_1! \cdots a_n! \geq b_1!\cdots b_k!$. If the equality holds, we must have $a_1=b_1$, and thus $n=1$. The rest is similar to when $b_k>1$.
\end{proof}

\begin{theorem}\label{a!+a}
Suppose $(a_1,\ldots,a_n)\succ (b_1,\ldots,b_k)$ where $a_i,b_i$ are decreasing sequences of positive integers.
\\
i) If $a_1!a_2!\cdots a_n! + a_1a_2 \cdots a_n = b_1!b_2! \cdots b_k! + b_1b_2 \cdots b_k$ then $n=k$ and $a_i=b_i$ for all i
\\
ii)If $a_1!a_2!\cdots a_n! - a_1a_2 \cdots a_n = b_1!b_2! \cdots b_k! - b_1 b_2 \cdots b_k$ then either (a) $n=k$ and  $a_i=b_i$ for all i or (b) $a_i, b_i \in  \{1, 2\}$
\end{theorem}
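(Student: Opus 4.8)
The plan is to deduce both parts from Theorem~\ref{majorizing} by bounding the ``correction terms'' $a_1a_2\cdots a_n$ and $b_1b_2\cdots b_k$ against the factorial products. The key elementary observation is that for any finite sequence of positive integers $(c_1,\ldots,c_m)$ one has $c_1c_2\cdots c_m \leq c_1!c_2!\cdots c_m!$, with equality precisely when every $c_i\in\{1,2\}$; more useful will be the strict gap estimate that if some $c_i\geq 3$ then $c_1!\cdots c_m! \geq 3\,(c_1c_2\cdots c_m)$, or an even stronger multiplicative bound obtained by iterating $c! \geq 2c$ for $c\geq 2$ and $c!\geq \tfrac{3}{2}c$-type inequalities. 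First I would set $A=a_1!\cdots a_n!$, $P=a_1\cdots a_n$, $B=b_1!\cdots b_k!$, $Q=b_1\cdots b_k$, so Theorem~\ref{majorizing} gives $A\geq B$, with equality iff $n=k$ and $a_i=b_i$ for all $i$; also since $(a_1,\ldots,a_n)\succ(b_1,\ldots,b_k)$ a short direct argument (majorization controls partial sums, and a product of positive integers with prescribed sum constraints) shows $P\geq Q$ as well, again with equality iff the sequences coincide. Granting $A\geq B$ and $P\geq Q$, part (i) is immediate: $A+P=B+Q$ forces $A=B$ and $P=Q$, hence $n=k$ and $a_i=b_i$ for all $i$.

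For part (ii), the equation $A-P=B-Q$ rewrites as $A-B=P-Q\geq 0$, so $A\geq B$ consistently, and we must rule out the possibility that $A>B$ while $P-Q$ makes up exactly the difference. The strategy is: if $A=B$ then by Theorem~\ref{majorizing} the sequences are equal and we are in case (a). So assume $A>B$; then by the equality clause of Theorem~\ref{majorizing} the sequences differ, and I want to show this forces all entries into $\{1,2\}$. The mechanism is that $A-B$ is ``large'' whenever any $a_i\geq 3$ or $n>k$, whereas $P-Q$ is comparatively ``small.'' Concretely I would prove: if $a_1\geq 3$, then $A\geq 3B$ — because one can peel off the comparison so that the factorial product on the left exceeds that on the right by at least the factor $a_1!/((a_1-1)!)=a_1\geq 3$ after matching the remaining structure via the inductive decomposition already used in Theorem~\ref{majorizing}. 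Combined with $P\leq A/3$ (iterating $c!\geq 3c$... more carefully, $P \le A$ always and in the relevant regime $P \le A/2$), one gets $A-P \ge A - A/2 = A/2 > B \ge B-Q$, a contradiction unless $a_1\le 2$; and $a_1\le 2$ with $a_i$ decreasing forces every $a_i\in\{1,2\}$, and then majorization plus $A-P=B-Q$ pins $b_i\in\{1,2\}$ too, landing in case (b).

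The main obstacle I anticipate is making the estimate ``$A>B$ implies $A$ is significantly larger than $B$, enough to dominate $P-Q$'' fully rigorous, because $A$ and $B$ need not be off by a clean multiplicative factor in general — the gap depends delicately on \emph{where} and \emph{how} the two sequences differ under majorization. I expect the cleanest route is again induction on $\sum a_j+\sum b_j$ mirroring the case split in the proof of Theorem~\ref{majorizing} (Case 1: some partial sum agrees, split both equations across index $i$ and recurse on each block; Case 2: strictly dominating partial sums, decrement the largest $a_j$), carrying along the auxiliary inequality $c_1!\cdots c_m! - c_1\cdots c_m \geq (\text{something monotone})$ that behaves well under the split and the decrement. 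The base case and the bookkeeping of when $P-Q=0$ versus $P-Q>0$ (which distinguishes case (a) from the forced-into-$\{1,2\}$ analysis) will need care, but no deep idea beyond Theorem~\ref{majorizing} and the inequality $c!\geq 2c$ for $c\geq 2$ should be required.
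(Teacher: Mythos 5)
There is a genuine gap, and it sits at the load-bearing step of part (i). You claim that majorization alone gives $P=a_1\cdots a_n\geq b_1\cdots b_k=Q$ (with equality iff the sequences coincide), and your entire argument for (i) — ``$A\geq B$ and $P\geq Q$, so $A+P=B+Q$ forces $A=B$'' — rests on this. But the claim is false: $(6)\succ(2,2,2)$ yet $6<2\cdot 2\cdot 2=8$, and $(6)\succ(3,3)$ yet $6<9$; even the equality clause fails, since $(4)\succ(2,2)$ gives $P=Q=4$ with distinct sequences. Unlike the factorial product, the plain product is not monotone on this poset, so no ``short direct argument'' from the partial-sum constraints can exist. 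The paper gets $P\geq Q$ not from majorization but from the equation itself: it factors the equation as $a_1\cdots a_n\bigl((a_1-1)!\cdots(a_n-1)!+1\bigr)=b_1\cdots b_k\bigl((b_1-1)!\cdots(b_k-1)!+1\bigr)$, cancels the common prefix up to the first index $m$ where the sequences differ (where necessarily $a_m>b_m$), and observes that each remaining $b_j\leq a_m-1$ divides $(a_1-1)!\cdots(a_n-1)!$ and hence is coprime to that product plus $1$; therefore $b_m\cdots b_k$ divides $a_m\cdots a_n$, which yields $Q\leq P$. This divisibility/coprimality idea is the essential ingredient your proposal is missing, and it is what makes both parts work.

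Part (ii) has a second, independent problem: the quantitative lemma you propose (``if the sequences differ and $a_1\geq 3$ then $A\geq 3B$,'' or even $A\geq 2B$) is also false — take $(3,1)\succ(2,2)$, where $A=6$ and $B=4$. You correctly flag this as the main obstacle, but the fallback you sketch (an induction mirroring Theorem~\ref{majorizing} carrying an auxiliary inequality) is not specified enough to assess and, without the divisibility argument, there is no evident invariant that survives the decrement step. The paper's route for (ii) is instead: dispose of the degenerate case $A=P$ (which forces all $a_i,b_i\in\{1,2\}$), and otherwise reuse the same coprimality argument on $a_1\cdots a_n\bigl((a_1-1)!\cdots(a_n-1)!-1\bigr)=b_1\cdots b_k\bigl((b_1-1)!\cdots(b_k-1)!-1\bigr)$ to get $P\geq Q$, then compare $(a_1-1,\ldots,a_n-1)$ with $(b_1-1,\ldots,b_k-1,1,\ldots,1)$ via Theorem~\ref{majorizing} to force equality. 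So the approach needs the arithmetic (divisibility) input, not just analytic size estimates.
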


\begin{proof}
(i) Assume $a_m>b_m$ and $a_1=b_1,\ldots,a_{m-1}=b_{m-1}$.
\\
$a_1\cdots a_n((a_1-1)!\cdots (a_n-1)!+1)=b_1\cdots b_k((b_1-1)!\cdots(b_k-1)!+1)$. Therefore, 
$$a_m\cdots a_n((a_1-1)!\cdots (a_n-1)!+1)=b_m\cdots b_k((b_1-1)!\cdots(b_k-1)!+1)$$\\
Since $b_j\leq a_m-1$ for all $j$ with $m\leq j \leq k$, we have $b_j\mid (a_1-1)! \cdots (a_n-1)!$. This implies $\gcd(b_j, (a_1-1)! \cdots (a_n-1)!+1)=1$ and $\gcd(b_m\cdots b_k, (a_1-1)! \cdots (a_n-1)!+1)=1$ Thus, $b_m\cdots b_k\mid a_m\cdots a_n \implies b_m\cdots b_k \leq a_m\cdots a_n $ and thus $$b_1\cdots b_k \leq a_1\cdots a_n\; (*)$$
We know $(a_1,\cdots ,a_n)\succ(b_1,\cdots,b_k)$. By Theorem~\ref{majorizing} $a_1!\cdots a_n!\geq b_1!\cdots b_k!$. Combining this with $(*)$ we obtain $$a_1!\cdots a_n!+a_1\cdots a_n\geq b_1!\cdots b_k!+b_1\cdots b_k.$$ Since equality holds we must have $a_1!\cdots a_n!= b_1!\cdots b_k!$ Therefore, by Theorem~\ref{majorizing} we have $n=k$ and $a_j=b_j$ for all $j$. \\

(ii) Note that $n!\geq n$ for all positive integers $n$. Thus, $a_1!\cdots a_n!-a_1\cdots a_n \geq 0$\\

{\it Case 1}: Suppose $a_1!\cdots a_n!- a_1\cdots a_n=0$. Therefore, $a_1!\cdots a_n!= a_1\cdots a_n $ Thus, $(a_1-1)!\cdots (a_n-1)!=1$. Therefore, $a_i-1=0,1$ and $a_i=1,2$ for all $i$. Similarly $b_i=1,2$ for all $i$ which gives us (b).\\

{\it Case 2}: Suppose $a_1!\cdots a_n!-a_1\cdots a_n>0$. By a similar argument to (i) we deduce  $a_1\cdots a_n\geq b_1\cdots b_k$. \\

Note that $(a_1,\cdots, a_n) \succ (b_1,\cdots,b_k)$ implies $a_1+\cdots+a_n\geq b_1+\cdots+b_k$ and $n\geq k$. Therefore, $(a_1-1)+\cdots+(a_n-1)\geq (b_1-1)+\cdots+(b_k-1)$.\\ 
We can say $((a_1-1),\cdots,( a_n-1)) \succ ((b_1-1),\cdots,(b_k-1), 1,\cdots,1)$ .\\
By $(*)$ we have $(a_1-1)!\cdots (a_n-1)!\geq (b_1-1)!\cdots (b_k-1)! 1!\cdots 1!$.\\
Since equality holds, $a_i-1=b_i-1$ for all $i$.Hence, $a_i=b_i$ for all $i$ and by $(*)$ $n=k$
\end{proof}

\begin{theorem}\label{main}
Suppose $f:\mathbb{N}\longrightarrow \mathbb{R}^+ $ is a function satisfying:
\begin{itemize} 
\item $f(0)=1$, and
\item  $\dfrac{f(x)}{f(x-1)}$ is strictly increasing (resp. decreasing).
\end{itemize}
Then the following holds:\\

If $(a_1,\cdots,a_n) \succ (b_1,\cdots,b_k)$ for two sequences of positive integers, then $f(a_1)\cdots f(a_n) \geq f(b_1)\cdots f(b_k)$ (resp. $f(a_1)\cdots f(a_n) \leq f(b_1)\cdots f(b_k)$). Furthermore, equality holds if and only if $k=n$ and $a_i=b_i$ for all $i$.
\end{theorem}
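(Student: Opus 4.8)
The plan is to run the same induction that proves Theorem~\ref{majorizing}, replacing the factorial $x!$ by $f(x)$ everywhere and replacing the single inequality ``$a_j\ge b_k$'' used there (to compare $a_j\cdot(\text{stuff})$ with $b_k\cdot(\text{stuff})$) by monotonicity of the ratio $g(x):=f(x)/f(x-1)$. Two preliminary observations clean this up. First, the ``resp.\ decreasing'' half is free: if $g$ is strictly decreasing then $1/f$ satisfies $(1/f)(0)=1$ and has $\frac{(1/f)(x)}{(1/f)(x-1)}=1/g(x)$ strictly increasing, so the decreasing statement is just the increasing statement applied to the positive function $1/f$; I would therefore treat only the increasing case. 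Second, since $f(0)=1$ we have $f(m)=g(1)g(2)\cdots g(m)$ for every $m\ge 0$ (empty product $1$), and in particular $f(b_k)=g(b_k)\,f(b_k-1)$ holds even when $b_k=1$; as in Theorem~\ref{majorizing} I would also take both sequences to be non-increasing.

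Now induct on $\sum_j a_j+\sum_j b_j$. The base case $\sum a_j+\sum b_j=2$ forces $n=k=1$ and $a_1=b_1=1$. For the inductive step I would split exactly as in Theorem~\ref{majorizing}. In Case~1 (some proper initial partial sum agrees, $a_1+\cdots+a_i=b_1+\cdots+b_i$ with $i<n$) one gets $(a_1,\dots,a_i)\succ(b_1,\dots,b_i)$ and $(a_{i+1},\dots,a_n)\succ(b_{i+1},\dots,b_k)$, each of strictly smaller total; apply the inductive hypothesis to both pieces and multiply. In Case~2 (all proper initial partial sums of $a$ strictly exceed those of $b$), after discarding the trivial situation where $a$ is all ones (which forces $n=k=1$), write $a_1=\cdots=a_j>a_{j+1}\ge\cdots\ge a_n$ with $a_j\ge 2$ and lower the $j$-th coordinate by $1$; the strict inequalities of Case~2 give precisely the slack needed to check that the resulting vector still majorizes $(b_1,\dots,b_{k-1},b_k-1)$ when $b_k>1$ (and an analogous, slightly different reduction when $b_k=1$). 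Applying the inductive hypothesis and then restoring the original products by multiplying the left side by $g(a_j)$ and the right by $g(b_k)$ gives the inequality, since $a_j=a_1\ge b_1\ge b_k$ and $g$ increasing force $g(a_j)\ge g(b_k)>0$.

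The equality clause is handled alongside: in Case~1 equality propagates to both factors, so the inductive hypothesis gives $n=k$ and $a=b$; in Case~2, equality forces $g(a_j)=g(b_k)$, hence $a_j=b_k$, hence (with $a_1\ge b_1\ge b_k=a_1$) $b_1=a_1$, so by the Case~2 hypothesis $n=1$, and then $a_1\ge b_1+\cdots+b_k$ gives $k=1$. The step I expect to be the main obstacle is the Case~2 bookkeeping when $b_k=1$ and when $n=k$: there the naive ``drop the trailing $1$'' reduction violates the length requirement $n\le k$, and one must instead peel off a coordinate from \emph{both} sequences (using that Case~2 then forces $\sum_j a_j>\sum_j b_j$), all while keeping the equality analysis exactly tight; a related point to get right is that the size comparisons (like the factorial proof's tacit use of $x!\ge1$) really want $f$ non-decreasing on the positive integers, i.e.\ $f(1)\ge 1$, which holds in all of the paper's applications. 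As an alternative to the whole induction, one can take logarithms: with $h(t)=\ln g(t)$ strictly increasing the claim becomes $\sum_{t\ge1}h(t)\big(\#\{i:a_i\ge t\}-\#\{j:b_j\ge t\}\big)\ge 0$, which by summation by parts follows from the purely combinatorial fact that $\sum_{s\ge t}\big(\#\{i:a_i\ge s\}-\#\{j:b_j\ge s\}\big)\ge 0$ for every $t\ge1$, with the equality case then controlled by the strictness of $h$ together with the total-sum condition in the definition of $\succ$.
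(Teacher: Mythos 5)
Your induction is the paper's own proof almost verbatim: the same case split on whether some proper initial partial sum agrees, the same coordinate-lowering reduction, and the same use of monotonicity of $g(x)=f(x)/f(x-1)$ to restore the products; the reduction of the decreasing case to the increasing case via $1/f$ and the Abel-summation alternative are pleasant extras. The one substantive thing to flag is that your parenthetical worry that the argument ``really wants $f(1)\ge 1$'' is not overcaution: the theorem as stated is actually false without it. Take $f(0)=1$ and $f(x)/f(x-1)=1-2^{-x}$, which is strictly increasing and positive; then $(2)\succ(1)$ by the paper's definition, but $f(2)=\tfrac12\cdot\tfrac34=\tfrac38<\tfrac12=f(1)$. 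The failure occurs exactly where you predicted, in the $b_k=1$ branch: after peeling $b_k$ off, the $b$-list is strictly shorter than the $a$-list (in the extreme case $n=k=1$ it is empty), so the ``majorization'' fed to the inductive hypothesis violates the length requirement, and the inequality one actually needs degenerates to $f(1)\ge 1$ (equivalently, $f$ non-decreasing on the positive integers). The paper's proof commits the same silent step, so your proposal is no worse than the original; the clean fix is to add the hypothesis $f(1)\ge 1$ (resp.\ $f(1)\le 1$), or to restrict to $\sum a_i=\sum b_i$, under which your summation-by-parts argument closes without further assumptions. Every application in the paper satisfies the extra hypothesis, so nothing downstream is affected.
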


\begin{proof}
We will prove this by strong induction on $\sum\limits_{i=1}^n a_i+\sum\limits_{i=1}^k b_i$.\\ 
{\it Basis Step:} If $\sum\limits_{i=1}^n a_i+\sum\limits_{i=1}^k b_i=2$, then $a_1=b_1=1$, and the result is clear.\\
{\it Inductive Step:} Similar to the proof of Theorem~\ref{majorizing} we will consider two cases:\\
{\it Case 1}: For some $j$ with $1\leq j< n$, we have $a_1+\cdots+a_j=b_1+\cdots+b_j$. Thus $(a_1,\cdots,a_j)\succ(b_1,\cdots,b_j)$ and $(a_{j+1},\cdots,a_n)\succ(b_{j+1},\cdots,b_k)$. Therefore, by inductive hypothesis $\prod_{i = 1}^{j} f(a_{i})\geq \prod_{i = 1}^{j} f(b_{i})$ and $\prod_{i = j+1}^{n} f(a_{i})\geq \prod_{i = j+1}^{k} f(b_{i})$. Multiplying these two we obtain the result. By inductive hypothesis the equality holds if and only if $k=n$ and $a_i=b_i$ for all $i$.\\

{\it Case 2}: For all $j<n$, we have $a_1+\cdots+a_j>b_1+\cdots+b_j$. Suppose $a_1=\cdots=a_j>a_{j+1}\geq \cdots \geq a_n$\\
If $a_n=b_k=1$, then $(a_1,\cdots,a_{n-1})\succ(b_1,\cdots,b_{k-1})$. The rest follows from the inductive hypothesis.\\
If $b_k=1,$ and $a_n>1$, then $(a_1,\cdots,a_n-1)\succ(b_1,\cdots,b_{k-1})$. This implies $\prod\limits_{i = 1}^{n-1} f(a_{i})f(a_n-1)\geq\prod\limits_{i = 1}^{k-1} f(b_{i})$.  By assumption $\dfrac{f(a_n)}{f(a_n-1)} >\dfrac{f(1)}{f(0)}=f(b_k).$ Multiplying the two inequalities we obtain the result. \\
If $b_k>1$, then $(a_1, \ldots, a_{j-1}, a_j-1, a_{j+1}, \ldots, a_n)\succ (b_1, \ldots, b_{k-1}, b_k-1)$. By inductive hypothesis $f(a_1)\cdots f(a_{j-1})f(a_j-1)f(a_{j+1})\cdots f(a_n)\geq f(b_1)\cdots f(b_{k-1})f(b_k-1).$ By assumption $a_j=a_1\geq b_1\geq b_k$, and thus $\dfrac{f(a_j)}{f(a_j-1)}\geq \dfrac{f(b_k)}{f(b_{k-1})}$. Multiplying these inequalities we obtain the result. 

If the equality holds, then we must have $a_j=b_k$, which means $a_1=b_1$, which by assumptions of this case we conclude that $n=1$ and since $a_1\geq b_1+\cdots+b_k$, we must have $n=k=1$ and this concludes the proof of the equality case.
\end{proof}

\begin{theorem}\label{main2}
Suppose $f:\mathbb{N}\longrightarrow \mathbb{R}$ is a function satisfying:
\begin{itemize} 
\item $f(0)=0$, and
\item  $f(x)-f(x-1)$ is strictly increasing (resp. decreasing).
\end{itemize}
Then the following holds:\\

If $(a_1,\cdots,a_n) \succ (b_1,\cdots,b_k)$ for two sequences of positive integers, then $f(a_1)+\cdots+f(a_n) \geq f(b_1)+\cdots+f(b_k)$ (resp. $f(a_1)+\cdots+f(a_n) \leq f(b_1)+\cdots+f(b_k)$). Furthermore, equality holds if and only if $k=n$ and $a_i=b_i$ for all $i$.
\end{theorem}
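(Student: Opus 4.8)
The plan is to run the argument in near‑perfect parallel with the proof of Theorem~\ref{main}, trading every product for a sum and trading the multiplicative comparison $f(x)/f(x-1)$ for the additive comparison $f(x)-f(x-1)$. I would first reduce the strictly‑decreasing case to the strictly‑increasing one: if $f$ satisfies $f(0)=0$ and $f(x)-f(x-1)$ is strictly decreasing, set $g=-f$; then $g(0)=0$, $g(x)-g(x-1)=-(f(x)-f(x-1))$ is strictly increasing, and $\sum g(a_i)\ge\sum g(b_i)$ is equivalent to $\sum f(a_i)\le\sum f(b_i)$ with the same equality condition. So it suffices to prove: if $f(0)=0$, $f(x)-f(x-1)$ is strictly increasing, and $(a_1,\dots,a_n)\succ(b_1,\dots,b_k)$, then $\sum_{i=1}^n f(a_i)\ge\sum_{i=1}^k f(b_i)$, with equality iff $n=k$ and $a_i=b_i$ for all $i$. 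As in Theorem~\ref{majorizing} I would induct on $\sum_{i=1}^n a_i+\sum_{i=1}^k b_i$; the base case $\sum a_i+\sum b_i=2$ forces $n=k=1$ and $a_1=b_1=1$, and is immediate. Following the proof of Theorem~\ref{main} I take the entries of each sequence to be arranged in non‑increasing order.

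For the inductive step I would use the same dichotomy as in Theorem~\ref{main}. In the first case some proper prefix sum agrees, say $a_1+\cdots+a_j=b_1+\cdots+b_j$ with $j<n$; then $(a_1,\dots,a_j)\succ(b_1,\dots,b_j)$, and the prefix argument from the proof of Theorem~\ref{majorizing} gives $(a_{j+1},\dots,a_n)\succ(b_{j+1},\dots,b_k)$. Applying the inductive hypothesis to each block and adding the two inequalities yields $\sum f(a_i)\ge\sum f(b_i)$; if equality holds it must hold in both blocks, and the inductive hypothesis then forces $n=k$ and $a_i=b_i$ for all $i$.

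In the second case every proper prefix sum of the $a$'s strictly exceeds the corresponding one for the $b$'s; write $a_1=\cdots=a_j>a_{j+1}\ge\cdots\ge a_n$. If $a_n=b_k=1$, delete the final entry on each side: $(a_1,\dots,a_{n-1})\succ(b_1,\dots,b_{k-1})$, and the inductive hypothesis together with $f(a_n)=f(1)=f(b_k)$ gives the inequality and the equality statement. If $b_k=1$ but $a_n>1$, decrement $a_n$ to get $(a_1,\dots,a_{n-1},a_n-1)\succ(b_1,\dots,b_{k-1})$, so by the inductive hypothesis $\sum_{i=1}^{n-1}f(a_i)+f(a_n-1)\ge\sum_{i=1}^{k-1}f(b_i)$; since $a_n\ge2$, strict monotonicity of the differences gives $f(a_n)-f(a_n-1)>f(1)-f(0)=f(b_k)$, and adding yields a strict inequality, so no equality arises here. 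Finally, if $b_k>1$, decrement $a_j$ and $b_k$: just as in Theorem~\ref{main} (and using the strictness of the Case‑2 prefix inequalities) one has $(a_1,\dots,a_{j-1},a_j-1,a_{j+1},\dots,a_n)\succ(b_1,\dots,b_{k-1},b_k-1)$, whence the inductive hypothesis gives $\sum_{i\ne j}f(a_i)+f(a_j-1)\ge\sum_{i=1}^{k-1}f(b_i)+f(b_k-1)$. Since $a_j=a_1\ge b_1\ge b_k$ and $f(x)-f(x-1)$ is increasing, $f(a_j)-f(a_j-1)\ge f(b_k)-f(b_k-1)$; adding this to the previous inequality recovers $\sum f(a_i)\ge\sum f(b_i)$. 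For equality one needs $f(a_j)-f(a_j-1)=f(b_k)-f(b_k-1)$, i.e.\ $a_j=b_k$ by strictness, hence $a_1=b_1$; the Case‑2 hypothesis then forces $n=1$, and $a_1\ge b_1+\cdots+b_k$ forces $k=1$ and $a_1=b_1$.

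The part that needs genuine care — and the only place I anticipate any friction — is the bookkeeping in the second case: one must check that each decremented tuple is still a non‑increasing sequence of \emph{positive} integers (so $a_j-1\ge1$ has to be justified, which follows from $a_1>b_1\ge1$ when $n\ge2$, the short instances $n=1$ being settled directly from $a_1\ge b_1+\cdots+b_k$) and that the decrements preserve the majorization relation, which is exactly where the strictness of the Case‑2 prefix inequalities gets consumed; a minor extra subtlety is that the deletion step used when $b_k=1<a_n$ requires the lengths to remain compatible, so the sub‑possibility $n=k$ there may warrant a separate check. Beyond this, no new idea is needed relative to Theorem~\ref{main}: the sole substitution is to replace the multiplicative move ``$f(a_j)/f(a_j-1)\ge f(b_k)/f(b_k-1)$, then multiply'' by the additive move ``$f(a_j)-f(a_j-1)\ge f(b_k)-f(b_k-1)$, then add''.
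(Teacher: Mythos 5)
Your route is genuinely different from the paper's. The paper disposes of this theorem in two lines: it sets $g(x)=e^{f(x)}$, observes that $g(0)=1$ and that $g(x)/g(x-1)=e^{f(x)-f(x-1)}$ is strictly monotone exactly when $f(x)-f(x-1)$ is, and then invokes Theorem~\ref{main} and takes logarithms. You instead re-run the entire induction of Theorem~\ref{main} in additive form. The two arguments are logically equivalent (exponentiation is a bijection between the additive and multiplicative settings), so your version buys independence from Theorem~\ref{main} at the cost of repeating all of its case analysis; your reduction of the decreasing case to the increasing one via $-f$ is clean and valid, and your Case~1 and the $b_k>1$ subcase of Case~2 go through as you describe.

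However, the subcase you flag at the end --- $b_k=1<a_n$ with $n=k$ --- is not a minor bookkeeping point; it is a genuine hole, and it cannot be patched. With the paper's definition of $\succ$ (which requires the left-hand sequence to be no longer than the right-hand one), the decremented pair $(a_1,\ldots,a_{n-1},a_n-1)$ versus $(b_1,\ldots,b_{k-1})$ has lengths $n$ and $k-1$, so when $n=k$ the inductive hypothesis simply does not apply: for instance $(3,2)\succ(2,1)$ lands in exactly this subcase and would need $(3,1)\succ(2)$, which is false. Worse, the conclusion itself fails there without an extra hypothesis: take $f(0)=0$, $f(1)=-10$, $f(2)=-19$, $f(3)=-27$ (consecutive differences $-10<-9<-8$, strictly increasing); then $(3,2)\succ(2,1)$ but $f(3)+f(2)=-46<-29=f(2)+f(1)$. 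So the statement as written is false, and no amount of care in this subcase will rescue the induction; one needs an additional assumption such as $f(1)\ge f(0)=0$ (which forces all differences, hence $f$ itself, to be nonnegative and increasing --- true in all of the paper's applications), or else the standard majorization convention in which the two sequences have equal total sum. The same defect sits inside the paper's own proof of Theorem~\ref{main}, so the exponential reduction inherits it silently; your direct proof at least surfaces the problem, and your instinct that this spot ``may warrant a separate check'' was exactly right.
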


\begin{proof}
Suppose $f(x)-f(x-1)$ is strictly increasing, and consider the function $g(x)=e^{f(x)}$, and note that $g(x)$ satisfies the conditions of Theorem~\ref{main}. Therefore, $\prod g(a_i)\geq \prod g(b_i)$ and thus $\sum f(a_i)\geq \sum f(b_i)$. Furthermore, the equality case follows from Theorem~\ref{main}.
\end{proof}

\section{Applications}

Suppose $k\geq n$ are two positive integers. Then, $(\underbrace{k, \ldots, k}_{n\text{ times}})\succ(\underbrace{n, \ldots, n}_{k\text{ times}}) $. Using these two sequences in Theorem~\ref{a!+a} we obtain the following that is the main result of \cite{(k!)^n}.

\begin{theorem}  Let $n$ and $k$ be positive integers. Then, 
\begin{itemize}
\item $(k!)^n + k^n = (n!)^k +n^k$ holds if and only if
$k = n$.
\item $(k!)^n-k^n = (n!)^k-n^k$ holds if and only if $k = n$ or $(k, n) = (1, 2), (2, 1)$.
\end{itemize}
\end{theorem}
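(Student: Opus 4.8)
The plan is to obtain this as an immediate specialization of Theorem~\ref{a!+a}. Both displayed equations are unchanged when the roles of $n$ and $k$ are interchanged, so it suffices to prove the claims under the extra hypothesis $k\geq n$; the general case then follows by symmetry. Assume therefore $k\geq n$. As noted just before the statement, the constant (hence non-increasing) sequences $(\underbrace{k,\ldots,k}_{n})$ and $(\underbrace{n,\ldots,n}_{k})$ satisfy $(\underbrace{k,\ldots,k}_{n})\succ(\underbrace{n,\ldots,n}_{k})$: the length condition is $n\leq k$, the partial-sum conditions read $ik\geq in$ for $1\leq i\leq n$, and the final-sum condition is the equality $nk=kn$. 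These are precisely the sequences to feed into Theorem~\ref{a!+a}.

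For the first identity, take $a_i=k$ for $1\leq i\leq n$ and $b_i=n$ for $1\leq i\leq k$ in Theorem~\ref{a!+a}(i): the left-hand side $a_1!\cdots a_n!+a_1\cdots a_n$ becomes $(k!)^n+k^n$ and the right-hand side $b_1!\cdots b_k!+b_1\cdots b_k$ becomes $(n!)^k+n^k$. Hence the equation $(k!)^n+k^n=(n!)^k+n^k$ forces the two sequences to have the same length and the same entries, i.e. $n=k$. Conversely $k=n$ clearly yields equality, so $k=n$ is the only solution.

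For the second identity, feed the same sequences into Theorem~\ref{a!+a}(ii). The conclusion is that either $n=k$, or every entry of both sequences lies in $\{1,2\}$, i.e. $n,k\in\{1,2\}$. In the latter situation one checks the finitely many pairs with $k\geq n$: $(k,n)=(1,1)$ and $(2,2)$ already have $k=n$, while $(k,n)=(2,1)$ satisfies $2-2=1-1$; invoking the symmetry again adds $(k,n)=(1,2)$. Since $k=n$ and $(k,n)=(1,2),(2,1)$ are all readily checked to be genuine solutions, these are exactly all solutions.

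The whole argument is bookkeeping once Theorem~\ref{a!+a} is available; the only mild subtlety is keeping track of which of $n$ and $k$ indexes the number of terms of each sequence, so that the conclusion ``$n=k$'' of Theorem~\ref{a!+a} translates into the asserted statement about the original $n,k$ — together with the short finite case-check and the symmetry reduction needed for the minus equation.
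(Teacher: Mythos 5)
Your proof is correct and follows essentially the same route as the paper: the paper simply notes that $(\underbrace{k,\ldots,k}_{n})\succ(\underbrace{n,\ldots,n}_{k})$ for $k\geq n$ and feeds these sequences into Theorem~\ref{a!+a}. You are in fact somewhat more careful than the paper, since you make the symmetry reduction to $k\geq n$ explicit and carry out the finite check of the $\{1,2\}$ cases for the minus equation, both of which the paper leaves implicit.
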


\begin{theorem}\label{Thm 3.2}
Let $a_i$ and $b_i$ be two sequences of positive integers such that $(a_1, a_2, \ldots, a_n)\succ(b_1, b_2, \ldots, b_k)$. Then the following equations only have the trivial solutions $n=k$, and $a_i=b_i$ for all $i$.
$$\begin{array}{c}\dfrac{a_1!a_2!\cdots a_n!}{a_1^{a_1}a_2^{a_2}\cdots a_n^{a_n}} =\dfrac{b_1!b_2!\cdots b_k!}{b_1^{b_1}b_2^{b_2}\cdots b_k^{b_k}} \qquad (1)\\ 
\\
\prod\limits_{i=1}^n\dfrac{a_i}{((a_i-1)!)^{\frac{1}{a_i-1}}} =\prod\limits_{i=1}^k\dfrac{b_i}{((b_i-1)!)^{\frac{1}{b_i-1}}} \qquad (2)\\
\\
\prod\limits_{i=1}^n\dfrac{a_i^{a_i+1}}{((a_i-1)!)^{\frac{1}{a_i-1}}} =\prod\limits_{i=1}^k\dfrac{b_i^{b_i+1}}{((b_i-1)!)^{\frac{1}{b_i-1}}} \qquad (3)\\ \end{array} $$
\end{theorem}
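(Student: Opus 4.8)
The plan is, for each of the three equations, to exhibit an auxiliary function and then invoke Theorem~\ref{main} or (after taking logarithms) Theorem~\ref{main2}, arranging matters so that the asserted identity is precisely the equality case of that theorem. Throughout I read a factor such as $((a-1)!)^{1/(a-1)}$ through the continuous extension $x\mapsto(\Gamma(x))^{1/(x-1)}$; in particular at $a=1$ its value is $e^{\psi(1)}=e^{-\gamma}$, so that factor of equation $(2)$ equals $e^{\gamma}$. This is the normalization under which the auxiliary functions below meet the hypotheses of Theorems~\ref{main} and \ref{main2}.

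For equation $(1)$, take $f(x)=x!/x^{x}=\Gamma(x+1)/x^{x}$, with the conventions $0!=0^{0}=1$, so that $f(0)=1$. A direct cancellation gives $f(x)/f(x-1)=\bigl(\tfrac{x-1}{x}\bigr)^{x-1}=(1-1/x)^{x-1}$, and this ratio is strictly decreasing in $x$: writing it as $e^{\varphi(x)}$ with $\varphi(x)=(x-1)\ln\tfrac{x-1}{x}$ we get $\varphi'(x)=\ln t+1-t<0$ for $t=\tfrac{x-1}{x}\in(0,1)$. Hence Theorem~\ref{main} (decreasing case) gives $\prod_i f(a_i)\le\prod_i f(b_i)$, with equality if and only if $n=k$ and $a_i=b_i$ for all $i$; since $(1)$ asserts $\prod_i f(a_i)=\prod_i f(b_i)$, only the trivial solution survives.

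For equations $(2)$ and $(3)$ the fractional exponents make it natural to take logarithms and use Theorem~\ref{main2}. Using $\ln\Gamma(x)=\ln\Gamma(x+1)-\ln x$, one checks that the logarithm of the $i$-th factor of $(2)$ is $G_2(x)=\dfrac{x\ln x-\ln\Gamma(x+1)}{x-1}$ and of $(3)$ is $G_3(x)=\dfrac{x^{2}\ln x-\ln\Gamma(x+1)}{x-1}$. Each extends continuously to $[0,\infty)$ with $G_2(0)=G_3(0)=0$ and (removable singularity at $x=1$) $G_2(1)=G_3(1)=1-\psi(2)=\gamma$, so the first hypothesis of Theorem~\ref{main2} holds. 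It then remains to show that $G_2(m)-G_2(m-1)$ is strictly decreasing and $G_3(m)-G_3(m-1)$ strictly increasing in integers $m\ge1$. For $m\ge2$ this follows once we know that $G_2$ is strictly concave and $G_3$ strictly convex on $(0,\infty)$ (forcing the corresponding sign of $G_i(m+1)-2G_i(m)+G_i(m-1)$), and the lone step $m=1$, which involves the special values $0$ and $\gamma$, is checked directly using $\tfrac12\ln2<\gamma<\tfrac32\ln2$. For the concavity/convexity, write $G_i(x)=u_i(x)/(x-1)$ with $u_2(x)=x\ln x-\ln\Gamma(x+1)$ and $u_3(x)=x^{2}\ln x-\ln\Gamma(x+1)$, so that $u_2(1)=u_3(1)=0$; a short computation shows that when $u(1)=0$ the second derivative of $u(x)/(x-1)$ has constant sign on all of $(0,\infty)$, equal to the (constant) sign of $u'''$ — indeed its numerator vanishes at $x=1$ with derivative $u'''(x)(x-1)^{2}$. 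Finally $u_2'''(x)=-x^{-2}-\psi''(x+1)$ while $u_3'''(x)=2x^{-1}-\psi''(x+1)$, and since $-\psi''(x+1)=2\sum_{m\ge1}(x+m)^{-3}<2\int_{0}^{\infty}(x+t)^{-3}\,dt=x^{-2}$ we get $u_2'''<0$ and $u_3'''>0$ on $(0,\infty)$, as needed. Then Theorem~\ref{main2} gives $\sum_i G_2(a_i)\le\sum_i G_2(b_i)$ and $\sum_i G_3(a_i)\ge\sum_i G_3(b_i)$, with equality in either case if and only if $n=k$ and $a_i=b_i$ for all $i$; since $(2)$ and $(3)$ assert these equalities, again only the trivial solutions survive.

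The main obstacle is exactly the monotonicity of the difference sequences for $G_2$ and $G_3$, i.e. the concavity and convexity assertions: near $x=1$ the formulas are a little delicate, and because the leading terms of the relevant integer second differences cancel, a purely arithmetic estimate looks awkward, whereas the slope-function reduction to the sign of $u_i'''$, combined with the elementary bound $0<-\psi''(x+1)<x^{-2}$ (equivalently, estimates in the spirit of those recorded before Theorem~C), makes it go through cleanly.
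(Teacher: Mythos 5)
Your proof is correct, and for equations (2) and (3) it takes a genuinely different route from the paper's. For (1) you and the paper use the same function $f(x)=x!/x^x$; your verification that $(1-1/x)^{x-1}$ decreases via $\ln t+1-t<0$ is a slightly slicker version of the paper's second-derivative computation. For (2) and (3) the paper stays multiplicative: it applies Theorem~\ref{main} to $f(x)=x/((x-1)!)^{1/(x-1)}$ and $f(x)=x^{x+1}/((x-1)!)^{1/(x-1)}$ (with ad hoc values $f(1)=1.5$, resp.\ $2$), differentiates $\ln f(x)-\ln f(x+1)$ directly, and controls the result with the Gamma-function bounds of Theorem C plus, for (3), an AM--GM estimate; notably, the paper's computation for (2) only closes under the assumption $\ln x>4$, and the range $x\le e^4$ is disposed of with ``we can see'' and a plotted graph. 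You instead take logarithms and feed $G_i(x)=u_i(x)/(x-1)$ into Theorem~\ref{main2}, reduce monotonicity of the integer differences to concavity/convexity of $G_i$ via the observation that the numerator of $\bigl(u(x)/(x-1)\bigr)''$ vanishes at $x=1$ with derivative $u'''(x)(x-1)^2$ whenever $u(1)=0$, and settle the sign of $u_i'''$ with the series bound $0<-\psi''(x+1)<x^{-2}$; the one boundary step $m=1$ is the explicit check $\tfrac12\ln2<\gamma<\tfrac32\ln2$. I verified the pieces ($G_2(0)=G_3(0)=0$, $G_2(1)=G_3(1)=\gamma$, $G_2(2)=\ln2$, $G_3(2)=3\ln2$, and the third-derivative signs), and since your estimate is uniform in $x$ it actually repairs the paper's graphical gap in case (2). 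The only caveat is one you share with the paper: the factor $((a_i-1)!)^{1/(a_i-1)}$ is undefined at $a_i=1$, and you and the paper adopt different conventions there (the Gamma-limit $e^{-\gamma}$ versus the paper's $f(1)=1.5$ or $2$); either convention makes the monotonicity argument go through, but strictly speaking the two proofs read equations (2) and (3) slightly differently at those indices.
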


\begin{proof}
It is enough to prove the following functions satisfy the conditions of Theorem~\ref{main}.
\begin{enumerate}[i.]
\item $f(0)=1$, and $f(x)=\dfrac{x!}{x^x}$, when $x\geq 1$.
\item $f(0)=1$, $f(1)=1.5$, $f(x)=\dfrac{x}{((x-1)!)^{1/(x-1)}}$, when $x\geq 2$.
\item $f(0)=1$, $f(1)=2$, $f(x)=\dfrac{x^{x+1}}{((x-1)!)^{1/(x-1)}}$, when $x\geq 2$.
\end{enumerate}
(i) Let $g(x)= \dfrac{f(x)}{f(x-1)}$ for all positive integers $x$. We note that  $g(1)=1$, and  $g(x)=\left(\dfrac{x-1}{x}\right)^{x-1}$ for all $x>1$. We will show $\ln(g(x))$ is strictly decreasing.\\

Let $h(x)=\ln(g(x))=(x-1)\ln\left(1-\dfrac{1}{x}\right)$.\\
$h'(x)= \ln\left(1-\dfrac{1}{x}\right) + \dfrac{1}{x}$, and thus $h''(x)=\dfrac{1}{x^2(x-1)}>0$ for all $x>1$. This means $h'(x)$ is strictly increasing over $[2, \infty)$. This implies $$h'(x)<\lim\limits_{t\to\infty} h'(t)= \ln (1-0)+0=0,$$ which means $h$ is strictly decreasing over $[2, \infty)$. On the other hand, $h(1)=0 > h(2)=\ln(0.5)$, and hence $h(x)$ is strictly decreasing over $[1, \infty)$. This completes the proof.\\

(ii) To prove $\dfrac{f(x)}{f(x-1)}$ is strictly decreasing, we will need to show the derivative of  $\ln \dfrac{f(x)}{f(x+1)}=\ln f(x)-\ln f(x+1)$ is positive. Using the fact that $\Gamma(x)=(x-1)!$ we have $$\ln f(x)= \ln x-\dfrac{\ln\Gamma(x)}{x-1}.$$ Also note that $\Gamma(x+1)=x\Gamma(x),$ and thus $$\ln f(x+1)= \ln (x+1)-\dfrac{\ln x+\ln\Gamma(x)}{x}.$$
Note that the derivative of $\ln f(x)-\ln f(x+1)$ is equal to:
$$\begin{array}{cc}
& \dfrac{1}{x}-\dfrac{\Gamma'(x)}{\Gamma(x)(x-1)}+\dfrac{\ln \Gamma(x)}{(x-1)^2}-\dfrac{1}{x+1}+\dfrac{1}{x^2}-\dfrac{\ln x}{x^2}-\dfrac{\ln \Gamma(x)}{x^2}+\dfrac{\Gamma'(x)}{x\Gamma(x)} \\ \\ = & \dfrac{1}{x(x+1)}+ \dfrac{\ln\Gamma(x)(2x-1)}{x^2(x-1)^2}-\dfrac{\Gamma'(x)}{\Gamma(x)x(x-1)}-\dfrac{\ln x}{x^2}+\dfrac{1}{x^2}\\ \\ > & \dfrac{1}{x(x+1)}+ ((x-1/2)\ln x-x)\dfrac{2x-1}{x^2(x-1)^2}-\dfrac{\ln x}{x(x-1)}-\dfrac{\ln x}{x^2}+\dfrac{1}{x^2} \\ \\ = & \ln x\left(\dfrac{(2x-1)^2}{2x^2(x-1)^2}-\dfrac{1}{x(x-1)}-\dfrac{1}{x^2}\right) + \dfrac{1}{x(x+1)}- \dfrac{x(2x-1)}{x^2(x-1)^2}+\dfrac{1}{x^2}\\ \end{array}$$ 
Here we used the inequalities in Theorem C.
On combining the log terms and the fractions we get, 
$$\begin{array}{cc} & \ln x\left(\dfrac{2x-1}{2x^2(x-1)^2}\right) + \dfrac{-4x^2+x+1}{x^2(x-1)^2(x+1)}\\ \\ = & \dfrac{(2x^2+x-1)\ln x-8x^2+2x+2}{2x^2(x-1)^2(x+1)} \\ \\ > & \dfrac{4(2x^2+x-1)-8x^2+2x+2}{2x^2(x-1)^2(x+1)} \\ \\ = & \dfrac{6x-2}{2x^2(x-1)^2(x+1)}>0,\end{array}$$
assuming $\ln x > 4$. When $x\leq e^4$, we can see that $\dfrac{f(x)}{f(x-1)}$ is strictly decreasing.\\

\begin{center}
    \includegraphics[scale= 0.5]{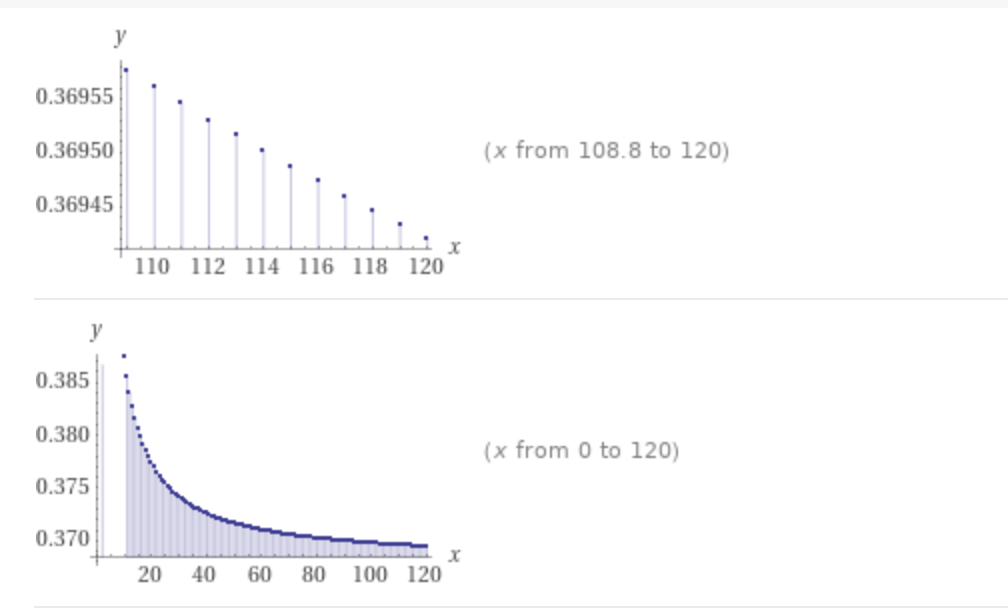}
\end{center} 

(iii) We note that $\dfrac{f(x)}{f(x-1)}= \dfrac{x^{x+1}}{(x-1)^x}\dfrac{(x-2)!^{1/(x-2)}}{(x-1)!^{1/(x-1)}}$ for all $x\geq 3$.\\ 

Let $h(x)=\ln\left(\dfrac{x^{x+1}}{(x-1)^x}\right)= (x+1)\ln x -x\ln (x-1)$. Then, $$h'(x)=\ln x+ \dfrac{x+1}{x} -\dfrac{x}{x-1}-\ln (x-1)= \left(\ln x +\dfrac{1}{x}\right) - \left(\ln(x-1) + \dfrac{1}{x-1}\right)$$

Let $k(x)= \ln x +\dfrac{1}{x}$. We have $k'(x)= \dfrac{1}{x}-\dfrac{1}{x^2}>0$. Therefore, $k(x)$ is strictly increasing. Thus, $k(x)> k(x-1)$, for all $x$, and thus $h'(x)>0$, which implies $h(x)$ is strictly increasing.\\
We will now prove $\dfrac{((x-2)!)^{1/(x-2)}}{((x-1)!)^{1/(x-1)}}<\dfrac{((x-1)!)^{1/(x-1)}}{(x!)^{1/x}}$. Clearing the denominator in the exponents and simplifying we get $$\left(\dfrac{(x-1)^{x+1}}{x^{x-1}}\right)^{x-2}>((x-2)!)^2.$$

$(x-2)!^2$ can be written as the product of $(k+1)(x-2-k),$ where $0\leq k\leq x-3$. By AM-GM inequality, $$(k+1)(x-2-k)\leq \left(\dfrac{k+1+x-2-k}{2}\right)^2= \left(\dfrac{x-1}{2}\right)^2$$ Thus, it is enough to prove $\dfrac{(x-1)^{x+1}}{x^{x-1}}> \left(\dfrac{x-1}{2}\right)^2$. This is equivalent to  $\dfrac{(x-1)^{x-1}}{x^{x-1}}>\dfrac{1}{4}$. We will show that $\dfrac{x^{x-1}}{(x-1)^{x-1}}<4$ for all $x\geq 3$. 
Let $g(x)=  (x-1)\ln x-(x-1)\ln (x-1)$. Then, $$g'(x)=\dfrac{x-1}{x} + \ln x -\dfrac{x-1}{x-1}- \ln (x-1)= \ln x -\dfrac{1}{x} -\ln (x-1).$$ 
So, $$g''(x)= \dfrac{1}{x} +\dfrac{1}{x^2}-\dfrac{1}{x-1}= -\dfrac{1}{x(x-1)} +1/x^2<0.$$ Hence, $g'(x)$ is decreasing. 
$$g'(x)= \ln (x/(x-1)) -1/x > \lim\limits_{x\to\infty} g'(x)=\ln 1 -0=0.$$ Therefore, $g(x)=\dfrac{(x-1)^{x-1}}{x^{x-1}}$ is increasing. This implies
$$\dfrac{(x-1)^{x-1}}{x^{x-1}}< \lim\limits_{x\to\infty} \left(\dfrac{x}{x-1}\right)^{x-1}= \lim_{x\to\infty} (1+1/(x-1))^{x-1}=e<4$$
Note that $\dfrac{f(3)}{f(2)}=\dfrac{81}{8\sqrt2}, \dfrac{f(2)}{f(1)}=4$, and $\dfrac{f(1)}{f(0)}=2$, which implies $\dfrac{f(1)}{f(0)}<\dfrac{f(2)}{f(1)}<\dfrac{f(3)}{f(2)}$. Therefore, $\dfrac{f(x)}{f(x-1)}$ is strictly increasing, which means $f$ satisfies the conditions of Theorem~\ref{main}. This completes the proof.
\end{proof}

Setting $a_i=k$, and $b_i=n$ in Theorem~\ref{Thm 3.2} we obtain the following which are the main results of \cite{k^(k-1)}.

\begin{theorem}
Let $n$ and $k$ be two positive integers. Then the only solution to each of the following Diophantine equations is $n=k$.
\begin{itemize}
\item[(i)] $(k!)^nn^{nk}=(n!)^{k}k^{nk}$.
\item[(ii)] $\left(\dfrac{k^{k-1}}{(k-1)!}\right)^{n(n-1)}= \left(\dfrac{n^{n-1}}{(n-1)!}\right)^{k(k-1)}$
\item[(iii)] $\left(\dfrac{k^{k^2-1}}{(k-1)!}\right)^{n(n-1)}=\left(\dfrac{n^{n^2-1}}{(n-1)!}\right)^{k(k-1)} $.

\end{itemize}
\end{theorem}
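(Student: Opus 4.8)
The plan is to realize all three Diophantine equations as instances of Theorem~\ref{Thm 3.2} applied to constant sequences. Since each equation is symmetric in $n$ and $k$, I would first assume without loss of generality that $k\geq n$. As observed at the beginning of this section, the constant sequences then satisfy $(\underbrace{k,\ldots,k}_{n})\succ(\underbrace{n,\ldots,n}_{k})$, so Theorem~\ref{Thm 3.2} applies to $a_1=\cdots=a_n=k$ and $b_1=\cdots=b_k=n$, and in this situation its conclusion ``$n=k$ and $a_i=b_i$ for all $i$'' reduces to just $n=k$. Since $n=k$ is visibly a solution of all three equations, the remaining task is bookkeeping: to verify that (i), (ii), (iii) are equivalent to the specializations of equations (1), (2), (3) of Theorem~\ref{Thm 3.2} at these sequences.

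For (i) this is immediate: substituting $a_i=k$, $b_j=n$ into (1) gives $\dfrac{(k!)^n}{k^{nk}}=\dfrac{(n!)^k}{n^{nk}}$, and clearing denominators produces exactly $(k!)^nn^{nk}=(n!)^kk^{nk}$. Hence (i) holds iff the specialized (1) holds, and Theorem~\ref{Thm 3.2} gives $n=k$.

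For (ii) and (iii) there is one extra manipulation. Substituting the constant sequences into (2) gives $\left(\dfrac{k}{((k-1)!)^{1/(k-1)}}\right)^{n}=\left(\dfrac{n}{((n-1)!)^{1/(n-1)}}\right)^{k}$, and into (3) gives the same identity with the numerators $k,n$ replaced by $k^{k+1},n^{n+1}$. Both sides are positive real numbers, so raising each side to the positive power $(n-1)(k-1)$ is an equivalence; carrying this out and recombining the exponents using $k^2-1=(k-1)(k+1)$ (and $n^2-1=(n-1)(n+1)$) turns the specialized (2) into (ii) and the specialized (3) into (iii). So again (ii) (resp. (iii)) is equivalent to the specialized (2) (resp. (3)), and Theorem~\ref{Thm 3.2} forces $n=k$.

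I do not expect a genuine obstacle, since the whole theorem collapses into Theorem~\ref{Thm 3.2}; the only care needed is at the margins. First, raising to the power $(n-1)(k-1)$ is reversible only when $n,k\geq2$, so the cases $n=1$ or $k=1$ have to be checked by hand — for (i) this reduces to $k!=k^k$, forcing $k=1$, and the analogous direct check (using the convention for $\tfrac{k^{k-1}}{(k-1)!}$ at $k=1$ adopted in \cite{k^(k-1)}) disposes of (ii) and (iii). Second, one should double-check the exponent arithmetic so that the powers $n(n-1)$ and $k(k-1)$ appearing in (ii) and (iii) match precisely what falls out of equations (2) and (3); that routine verification is the only place an error could slip in.
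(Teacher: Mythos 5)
Your proposal is correct and follows essentially the same route as the paper: substitute the constant sequences $a_i=k$, $b_j=n$ (which majorize when $k\geq n$) into equations (1), (2), (3) of Theorem~\ref{Thm 3.2} and clear the exponents to recover (i), (ii), (iii). The only difference is your extra care about reversibility of raising to the power $(n-1)(k-1)$ and the degenerate cases $n=1$ or $k=1$, which the paper passes over silently; that caution is reasonable but does not change the argument.
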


\begin{proof}
Let $a_i=k$ and $b_i=n$ in the equations in Theorem~\ref{Thm 3.2} we get:\\
(i) $\dfrac{(k!)^n}{k^{nk}}= \dfrac{(n!)^k}{n^{nk}} $. Cross multiplying, we get the result $(k!)^n n^{nk}= (n!)^k k^{nk}$\\ \\
(ii) $\left(\frac{k}{(k-1)!^{\frac{1}{k-1}}}\right)^n= \left(\frac{n}{(n-1)!^{\frac{1}{n-1}}}\right)^k$. Clearing the denominator in the exponents we get the result $\left(\dfrac{k^{k-1}}{(k-1)!}\right)^{n(n-1)}= \left(\dfrac{n^{n-1}}{(n-1)!}\right)^{k(k-1)}$\\ \\
(iii)$\left(\frac{k^{k+1}}{(k-1)!^{\frac{1}{k-1}}}\right)^n= \left(\frac{n^{n+1}}{(n-1)!^{\frac{1}{n-1}}}\right)^k$.Clearing the denominator in the exponents we get the result $\left(\dfrac{k^{k^2-1}}{(k-1)!}\right)^{n(n-1)}=\left(\dfrac{n^{n^2-1}}{(n-1)!}\right)^{k(k-1)} $\\ 
\end{proof}

\begin{theorem}
Let $a_i$ and $b_i$ be two sequences of positive integers such that $(a_1, a_2, \ldots, a_n)\succ(b_1, b_2, \ldots, b_k)$. Then the following equations only have the trivial solutions $n=k$, and $a_i=b_i$ for all $i$.
$$\begin{array}{c}\sum\limits_{i=1}^n ((a_i+1)!)^{1/(a_i+2)}= \sum\limits_{i=1}^k ((b_i+1)!)^{1/(b_i+2)}\\
\\ 
\sum\limits_{i=1}^n ((a_i+2)!)^{1/(a_i+2)}= \sum\limits_{i=1}^k ((b_i+2)!)^{1/(b_i+2)}\end{array}$$
\end{theorem}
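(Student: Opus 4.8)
The plan is to deduce both equations from Theorem~\ref{main2}, applied to two carefully normalized functions. For the first equation I would use
$$f(x)=\bigl((x+1)!\bigr)^{1/(x+2)}-1=\Gamma(x+2)^{1/(x+2)}-1,$$
and for the second
$$f(x)=\bigl((x+2)!\bigr)^{1/(x+2)}-\sqrt2=\Gamma(x+3)^{1/(x+2)}-\sqrt2 .$$
In each case the constant that is subtracted is exactly the value of the power of the factorial at $x=0$, so $f(0)=0$ as Theorem~\ref{main2} requires; and because $a_i,b_i\ge1$, these constants contribute only $n\cdot1$ (resp.\ $n\sqrt2$) to the left side of the stated identity and $k\cdot1$ (resp.\ $k\sqrt2$) to the right side. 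Thus the identity in question is equivalent to $\sum_{i=1}^n f(a_i)+nc=\sum_{i=1}^k f(b_i)+kc$, with $c=1$ in the first case and $c=\sqrt2$ in the second.

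Next I would verify the hypotheses of Theorem~\ref{main2}. For both choices one checks that, for every integer $x\ge1$, $f(x)-f(x-1)=\Gamma(x+2)^{1/(x+2)}-\Gamma(x+1)^{1/(x+1)}$ (first function) and $f(x)-f(x-1)=\Gamma(x+3)^{1/(x+2)}-\Gamma(x+2)^{1/(x+1)}$ (second function), using the values $\Gamma(2)^{1/2}=1$ and $\Gamma(3)^{1/2}=\sqrt2$ at $x=1$. Hence it suffices to show that $x\mapsto\Gamma(x+2)^{1/(x+2)}$ is strictly convex and $x\mapsto\Gamma(x+3)^{1/(x+2)}$ strictly concave on $[0,\infty)$ (strict convexity of $g$ makes $g'$ increasing, so $g(x+1)-g(x)>g(x)-g(x-1)$, and symmetrically for concavity). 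Granting this, Theorem~\ref{main2} gives for the second equation $\sum f(a_i)\le\sum f(b_i)$, so $(k-n)\sqrt2=\sum f(b_i)-\sum f(a_i)\ge0$, i.e.\ $k\le n$; combined with $n\le k$ (a consequence of $\succ$) this forces $n=k$, whence $\sum f(a_i)=\sum f(b_i)$ and the equality clause of Theorem~\ref{main2} yields $a_i=b_i$. For the first equation Theorem~\ref{main2} gives $\sum f(a_i)\ge\sum f(b_i)$, hence $k-n=\sum f(a_i)-\sum f(b_i)\ge0$ (already known), with equality precisely when $n=k$ and $a_i=b_i$; so once $n=k$ is known the conclusion follows, but the case $n<k$ has to be excluded separately.

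The main obstacle is the convexity/concavity assertion. After the substitution $s=x+2$ it reads: $\Gamma(s)^{1/s}$ is strictly convex and $\Gamma(s+1)^{1/s}$ strictly concave, at least for $s\ge s_0$ with $s_0$ explicit, the finitely many smaller $s$ (together with the base comparisons $f(1)-f(0)$ versus $f(2)-f(1)$) being settled by direct computation. Writing $u(s)=\tfrac{\ln\Gamma(s)}{s}$ one has $\bigl(\Gamma(s)^{1/s}\bigr)''=\Gamma(s)^{1/s}\bigl(u'(s)^2+u''(s)\bigr)$ with $u'(s)=\tfrac{s\psi(s)-\ln\Gamma(s)}{s^2}$ and $u''(s)=\tfrac{s^2\psi'(s)-2s\psi(s)+2\ln\Gamma(s)}{s^3}$, so convexity is the inequality $u'(s)^2+u''(s)>0$; with $v(s)=\tfrac{\ln s}{s}+u(s)=\tfrac{\ln\Gamma(s+1)}{s}$ the concavity claim becomes $v'(s)^2+v''(s)<0$. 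I would try to force these by inserting the bounds $\ln(x-1)<\psi(x)<\ln x$ and $x\ln x-x+1<\ln\Gamma(x+1)<(x+1)\ln(x+1)-x$ from the introduction, the inequalities of Theorem~C, and an elementary estimate $\tfrac1x<\psi'(x)<\tfrac1x+\tfrac1{x^2}$ for the trigamma function. The genuine difficulty is that the leading terms of $u'^2+u''$ and of $v'^2+v''$ cancel, so the sign is decided only at order $s^{-3}$ (the surviving asymptotic term being $+\tfrac12 s^{-3}$, respectively $-\tfrac12 s^{-3}$); hence the crude bounds do not suffice, and one must replace them by sharper expansions such as $\psi(x)=\ln x-\tfrac1{2x}+O(x^{-2})$, $\psi'(x)=\tfrac1x+\tfrac1{2x^2}+O(x^{-3})$ and the matching refinement of $\ln\Gamma$, with an explicit or computer-assisted check in the bounded range, as the paper already does for a comparable estimate. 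Finally, excluding $n<k$ in the first equation does not follow from a crude size comparison, since $\Gamma(x+2)^{1/(x+2)}$ grows only asymptotically linearly; it appears to call for an argument of the divisibility type used earlier for the factorial equations.
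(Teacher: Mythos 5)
Your approach is the same as the paper's---reduce both equations to Theorem~\ref{main2} applied to $\Gamma(x+2)^{1/(x+2)}$ and $\Gamma(x+3)^{1/(x+2)}$, the analytic input being strict monotonicity of the successive differences---but your extra care with the hypothesis $f(0)=0$ exposes a real problem. The paper simply asserts that $f_1(x)=((x+1)!)^{1/(x+2)}$ and $f_2(x)=((x+2)!)^{1/(x+2)}$ ``satisfy the properties of Theorem~\ref{main2}'' and cites \cite{k^(k-1)} for the monotonicity of the differences; it never addresses $f(0)=0$, which fails here ($f_1(0)=1$, $f_2(0)=\sqrt2$) and which is genuinely used in the proofs of Theorems~\ref{main} and~\ref{main2}: it is exactly what makes the inequality survive when $n<k$. (For instance $(2)\succ(1,1)$, and the claimed conclusion $f(2)\ge f(1)+f(1)$ needs $f(0)\le 0$; numerically $f_1(2)=6^{1/4}<2\cdot 2^{1/3}=2f_1(1)$, so the unnormalized inequality is simply false.) Your subtraction of the constant is the correct repair, and the bookkeeping identity should read $(k-n)c=\sum f(a_i)-\sum f(b_i)$ (you have the difference reversed, though your conclusion $k\le n$ is the right one). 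For the second, concave, equation this closes the proof cleanly, and in that respect your argument is more complete than the paper's.

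Two gaps remain, both of which you flag yourself. First, for the convex equation the inequality from Theorem~\ref{main2} only reproduces $k\ge n$, so the case $n<k$ is not excluded; this is a genuine hole, and it is equally a hole in the paper's own proof, which silently treats $f_1$ as if $f_1(0)=0$. A divisibility argument is unlikely to help here since the summands are irrational radicals rather than integers; one would need either a linear-independence/irrationality argument or explicit size estimates in the range where $\sum f_1(a_i)-\sum f_1(b_i)$ could equal the integer $k-n$. Second, you do not actually establish the convexity of $f_1$ and concavity of $f_2$; your diagnosis that the leading asymptotic terms of $u'^2+u''$ cancel and that the crude bounds $\ln(x-1)<\psi(x)<\ln x$ are too weak is accurate, but on this point your plan is no less complete than the paper's, which outsources the entire claim to \cite{k^(k-1)} without proof.
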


\begin{proof}
It is enough to prove the following functions satisfy the conditions of Theorem~\ref{main2}.
\begin{enumerate}[(i)]
    \item $f_1(x)=((x+1)!)^{1/(x+2)}.$
    \item $f_2(x)=((x+2)!)^{1/(x+2)}.$
\end{enumerate}
Note that $f_1(x)=(\Gamma(x+2))^{1/(x+2)}$, and $f_2(x)=(\Gamma(x+3))^{1/(x+2)}$. In \cite{k^(k-1)} it is shown that $f_1(x+1)-f_1(x)$ and $f_2(x+1)-f_2(x)$ are both strictly monotone, which means $f_1$ and $f_2$ satisfy the properties of Theorem~\ref{main2}, as desired.
\end{proof}

\begin{theorem}\label{fibonacci} Let $F_n$ be the Fibonacci sequence. Suppose $F_{2n_1}\cdots F_{2n_k}= F_{2m_1}\cdots F_{2m_\ell}$, where $n_1>\cdots >n_k$ and $m_1>\cdots >m_\ell$ and $(n_1,\cdots ,n_k) \succ (m_1,\cdots ,m_\ell)$, then $k=\ell$ and $m_i=n_i$ for all $i$.
\end{theorem}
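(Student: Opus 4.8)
The plan is to deduce this from Theorem~\ref{main} (Theorem A) applied to the function $f\colon\NN\to\RR^+$ defined by $f(j)=F_{2j}$. First I would verify the two hypotheses of Theorem~\ref{main}. Since $F_0=1$, we have $f(0)=1$. It then remains to show that $x\mapsto f(x)/f(x-1)=F_{2x}/F_{2x-2}$ is strictly increasing on $\{1,2,3,\dots\}$, i.e.\ that $F_{2x+2}F_{2x-2}>F_{2x}^2$ for every $x\ge 1$; once this is known, $f$ meets the conditions of Theorem~\ref{main} in the "increasing" case (the ratios $F_{2x}/F_{2x-2}$ increase toward $\alpha^2=(3+\sqrt5)/2$).

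The key step is the identity $F_{2x+2}F_{2x-2}-F_{2x}^2=1$ for all $x\ge 1$. I would prove it directly from Binet's formula $F_n=(\alpha^{n+1}-\beta^{n+1})/(\alpha-\beta)$ recorded in the introduction: expanding the left-hand side, the two terms $\alpha^{4x+2}$ and $\beta^{4x+2}$ cancel, and using $\alpha\beta=-1$ the surviving cross terms collapse to $(\alpha^4+\beta^4-2)/5$. Since $\alpha+\beta=1$ and $\alpha\beta=-1$ give $\alpha^2+\beta^2=3$ and hence $\alpha^4+\beta^4=7$, this equals $1$. (Alternatively one may invoke Catalan's identity for the Fibonacci numbers, or a short induction.) In particular $F_{2x+2}F_{2x-2}=F_{2x}^2+1>F_{2x}^2$ for $x\ge 1$, so $f(x)/f(x-1)$ is strictly increasing.

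With $f$ in hand the theorem is immediate. The hypotheses $n_1>\cdots>n_k$, $m_1>\cdots>m_\ell$ and $(n_1,\dots,n_k)\succ(m_1,\dots,m_\ell)$ are precisely what Theorem~\ref{main} needs; note that the $n_i$ and $m_i$ are positive integers, so $f$ is only evaluated at arguments $\ge 1$ (the value $f(0)=1$ enters only through the ratio monotonicity test). Applying Theorem~\ref{main} gives $F_{2n_1}\cdots F_{2n_k}=f(n_1)\cdots f(n_k)\ge f(m_1)\cdots f(m_\ell)=F_{2m_1}\cdots F_{2m_\ell}$, with equality if and only if $k=\ell$ and $n_i=m_i$ for all $i$. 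Since we are given that the two products coincide, equality holds, whence $k=\ell$ and $n_i=m_i$ for all $i$.

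I do not expect a genuine obstacle: the only real content is the identity $F_{2x+2}F_{2x-2}-F_{2x}^2=1$, a short computation, together with the bookkeeping needed to check that the normalization $f(0)=1$ matches Theorem~\ref{main} and that the relevant direction of monotonicity is "increasing." If one wishes to be more careful one should also confirm the base case $x=1$ of the identity ($F_4F_0-F_2^2=5-4=1$), so that $f(1)/f(0)=2<5/2=f(2)/f(1)$ as required.
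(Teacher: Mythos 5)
Your proposal is correct and takes essentially the same route as the paper: both arguments apply Theorem~\ref{main} to $f(j)=F_{2j}$ (so $f(0)=F_0=1$) and reduce everything to showing $F_{2x+2}F_{2x-2}>F_{2x}^2$ via Binet's formula. The only cosmetic difference is that you evaluate the difference exactly as $1$ (Catalan's identity), whereas the paper stops at positivity by reducing it to $(\alpha^2-\beta^2)^2>0$; the underlying computation is identical.
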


\begin{proof} We will use the fact that $F_m= \dfrac{\alpha^{m+1} - \beta^{m+1}}{\alpha-\beta}$, where $\alpha,\beta$ are roots of $x^2-x-1=0$. By Theorem~\ref{main}, it is enough to prove  $\dfrac{F_{2n+2}}{F_{2n}}>\dfrac{F_{2n}}{F_{2n-2}}$.\\

This is equivalent to $F_{2n+2}F_{2n-2}>F_{2n}^2$, which is equivalent to $$(\alpha^{2n+3}-\beta^{2n+3})(\alpha^{2n-1}-\beta^{2n-1})>(\alpha^{2n+1}-\beta^{2n+1})^2$$ Simplifying we obtain $-\alpha^{2n+3}\beta^{2n-1}-\beta^{2n+3}\alpha^{2n-1}>-2\alpha^{2n+1}\beta^{2n+1}$. Dividing by $\alpha^{2n-1}\beta^{2n-1}=-1$ we get $-\alpha^4-\beta^4<-2\alpha^2\beta^2$ which is equivalent to $(\alpha^2-\beta^2)^2>0$. This completed the proof.
\end{proof}

\begin{theorem}
If $F_{2n_1+1}\cdots F_{2n_k+1}= F_{2m_1+1}\cdots F_{2m_\ell+1}$ and $n_1>\cdots >n_k$ and $m_1>\cdots >m_\ell$ and $(n_1,\cdots, n_k) \succ (m_1,\cdots, m_\ell)$ then $k=\ell$ and $m_i=n_i$ for all $i$.
\end{theorem}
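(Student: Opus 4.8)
The plan is to run the argument of Theorem~\ref{fibonacci} almost verbatim, with the even-index function $i\mapsto F_{2i}$ replaced by the odd-index function $f(i)=F_{2i+1}$. First I would check that $f\colon\NN\to\RR^+$ satisfies the hypotheses of Theorem~\ref{main}: since $F_1=1$ we have $f(0)=1$, and every $F_{2i+1}$ is a positive integer. The theorem then reduces to showing that the ratio $\dfrac{f(x)}{f(x-1)}=\dfrac{F_{2x+1}}{F_{2x-1}}$ is strictly \emph{decreasing} in $x$; granting this, the ``resp.\ decreasing'' clause of Theorem~\ref{main} applied to the hypothesis $F_{2n_1+1}\cdots F_{2n_k+1}=F_{2m_1+1}\cdots F_{2m_\ell+1}$ --- which is an \emph{equality} --- immediately yields $k=\ell$ and $n_i=m_i$ for all $i$, since equality in Theorem~\ref{main} is attained only in that case.

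For the monotonicity, note that $\dfrac{F_{2n+3}}{F_{2n+1}}<\dfrac{F_{2n+1}}{F_{2n-1}}$ is equivalent, after cross-multiplying positive quantities, to $F_{2n+3}F_{2n-1}<F_{2n+1}^2$. Substituting $F_m=\dfrac{\alpha^{m+1}-\beta^{m+1}}{\alpha-\beta}$ (where $\alpha,\beta$ are the roots of $x^2-x-1=0$) and clearing the common factor $(\alpha-\beta)^2>0$, this becomes
\[
-\alpha^{2n+4}\beta^{2n}-\alpha^{2n}\beta^{2n+4}<-2\,\alpha^{2n+2}\beta^{2n+2}.
\]
Dividing by $\alpha^{2n}\beta^{2n}=(\alpha\beta)^{2n}=1$ (using $\alpha\beta=-1$) reduces it to $-\alpha^4-\beta^4<-2\alpha^2\beta^2$, that is, $(\alpha^2-\beta^2)^2>0$, which holds because $\alpha\neq\beta$ are real. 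This finishes the verification, and hence the theorem.

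I do not expect a genuine obstacle here, as the core computation is the same Binet-formula manipulation used in Theorem~\ref{fibonacci}. The only point deserving care is the \emph{direction} of monotonicity: for even indices $F_{2n+2}/F_{2n}$ increases to $\alpha^2$, whereas for odd indices $F_{2n+1}/F_{2n-1}$ decreases to $\alpha^2$, so one must invoke the decreasing half of Theorem~\ref{main} rather than the increasing half --- a small check one can also confirm numerically, e.g.\ $F_3/F_1=3>F_5/F_3=8/3>F_7/F_5=21/8$. One further routine remark: if an index $n_k$ or $m_\ell$ is allowed to be $0$, then $F_{2\cdot 0+1}=F_1=1$ contributes trivially to the product, but as stated the theorem is cleanest when all indices are positive integers so that Theorem~\ref{main} applies directly.
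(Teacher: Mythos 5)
Your proposal is correct and takes exactly the route the paper intends: the paper's own proof is a one-line appeal to the argument of Theorem~\ref{fibonacci}, and you have carried out that argument for $f(i)=F_{2i+1}$, with the Binet-formula computation reducing to $(\alpha^2-\beta^2)^2>0$ exactly as in the even case. Your one substantive addition --- noting that the ratio $F_{2x+1}/F_{2x-1}$ is \emph{decreasing} (to $\alpha^2$) rather than increasing, so that one must invoke the other half of Theorem~\ref{main} --- is the detail the paper leaves implicit, and you have it right.
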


\begin{proof}
The proof is similar to that of Theorem~\ref{fibonacci}
\end{proof}

\begin{theorem}
Let $a$ be a positive integer and $b$ be a negative integer and $A_n$ be a sequence of non-negative integers satisfying $A_0=1$, $A_1^2-aA_1-b>0$, and $A_{n+2}=aA_{n+1}+bA_{n}$ for all $n\geq 0$. Then the sequence $A_n$ satisfies the uniqueness property.
\end{theorem}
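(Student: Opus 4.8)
The plan is to reduce this to Theorem~\ref{main}, mimicking the proofs of the Fibonacci theorems. I would take $f\colon\NN\to\RR^+$ defined by $f(n)=A_n$; since $A_0=1$, the first hypothesis of Theorem~\ref{main} holds automatically, so it suffices to check that $A_n>0$ for all $n$ (so that $f$ really maps into $\RR^+$ and the ratios below are defined) and that $A_n/A_{n-1}$ is strictly monotone in $n$.

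First I would establish positivity. If $A_1=0$ then $A_2=aA_1+bA_0=b<0$, contradicting the assumption that the $A_n$ are non-negative, so $A_1\ge1$. Then I argue by induction: if $A_{n-1},A_n>0$ but $A_{n+1}=0$, then $A_{n+2}=aA_{n+1}+bA_n=bA_n<0$, again impossible; hence $A_{n+1}>0$, and together with $A_0=1$ this gives $A_n>0$ for every $n$.

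Next I would handle monotonicity via the Tur\'an-type quantity $D_n:=A_{n+1}A_{n-1}-A_n^2$. Substituting the recurrence twice (once for $A_{n+2}$ and once for $A_{n+1}$) gives $D_{n+1}=-b\,D_n$, hence $D_n=(-b)^{n-1}D_1$, where $D_1=A_2A_0-A_1^2=aA_1+b-A_1^2=-(A_1^2-aA_1-b)<0$ by hypothesis. Since $b<0$ we have $-b>0$, so $D_n<0$ for all $n\ge1$; dividing $A_{n+1}A_{n-1}<A_n^2$ by $A_nA_{n-1}>0$ yields $A_{n+1}/A_n<A_n/A_{n-1}$, i.e.\ $x\mapsto f(x)/f(x-1)=A_x/A_{x-1}$ is strictly decreasing on the positive integers.

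Finally, $f$ satisfies the hypotheses of the strictly-decreasing case of Theorem~\ref{main}. If $A_{n_1}\cdots A_{n_k}=A_{m_1}\cdots A_{m_\ell}$ with $n_1>\cdots>n_k$, $m_1>\cdots>m_\ell$ and $(n_1,\dots,n_k)\succ(m_1,\dots,m_\ell)$, then $\prod f(n_i)=\prod f(m_j)$, placing us in the equality case of Theorem~\ref{main}, so $k=\ell$ and $n_i=m_i$ for all $i$, which is precisely the uniqueness property. The only substantive step is the sign analysis of $D_n$ (equivalently the inequality $A_{n+1}A_{n-1}<A_n^2$), which is where both hypotheses $b<0$ and $A_1^2-aA_1-b>0$ are genuinely used; everything else is bookkeeping plus an appeal to Theorem~\ref{main}.
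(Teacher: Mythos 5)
Your proof is correct, and it takes a genuinely different route from the paper's. The paper also reduces the statement to Theorem~\ref{main} via strict monotonicity of $A_{n+1}/A_n$, but it gets there by writing $\frac{A_{n+1}}{A_n}-\frac{A_{n+2}}{A_{n+1}}=\frac{A_n}{A_{n+1}}\,q\!\left(\frac{A_{n+1}}{A_n}\right)$ with $q(x)=x^2-ax-b$ and then arguing that the ratio never crosses a root of $q$; this forces a case analysis on the characteristic roots (no real roots; two distinct positive roots $\alpha<\beta$ with subcases $A_1<\alpha$ and $A_1>\beta$; a repeated root), each handled through the closed form $A_n=c_1\alpha^n+c_2\beta^n$. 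Your Cassini-type identity $D_{n+1}=-b\,D_n$ for $D_n=A_{n+1}A_{n-1}-A_n^2$ (which checks out: $D_{n+1}+bD_n=A_{n+1}(aA_n+bA_{n-1}-A_{n+1})=0$) collapses all of this into one line: $D_n=(-b)^{n-1}D_1$ with $D_1=-(A_1^2-aA_1-b)<0$ and $-b>0$, so $D_n<0$ uniformly and the ratio is strictly decreasing. The two computations are equivalent --- indeed $q(A_{n+1}/A_n)=-D_{n+1}/A_n^2$ --- but your formulation makes the constancy of sign immediate, avoids any dependence on the nature of the roots, and isolates exactly where the hypotheses $b<0$ and $A_1^2-aA_1-b>0$ enter. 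You also supply the strict positivity of all $A_n$, which is needed both to divide by $A_nA_{n-1}$ and for $f$ to land in $\mathbb{R}^+$; the paper uses this implicitly without proof.

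\begin{remark}
An alternative, case-free proof of the monotonicity in the preceding theorem: set $D_n=A_{n+1}A_{n-1}-A_n^2$. The recurrence gives $D_{n+1}=-bD_n$, hence $D_n=(-b)^{n-1}D_1$ with $D_1=A_2A_0-A_1^2=-(A_1^2-aA_1-b)<0$; since $-b>0$, every $D_n$ is negative, i.e.\ $A_{n+1}/A_n<A_n/A_{n-1}$ for all $n\geq 1$.
\end{remark}
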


\begin{proof} By Theorem~\ref{main} it is enough to prove $\dfrac{A_{n+1}}{A_{n}}$ is strictly monotone. Note that $$\dfrac{A_{n+1}}{A_n}-\dfrac{A_{n+2}}{A_{n+1}}=\dfrac{A_{n+1}}{A_n}-\dfrac{aA_{n+1}+bA_n}{A_{n+1}}=\dfrac{A_n}{A_{n+1}}\left(\left(\dfrac{A_{n+1}}{A_n}\right)^2- a\dfrac{A_{n+1}}{A_n}-b\right)$$ 

Letting $q(x)= x^2-ax-b$, we need to prove that either for all $n$, $q\left(\dfrac{A_{n+1}}{A_n}\right)>0$ or  for all $n$, $q\left(\dfrac{A_{n+1}}{A_{n}}\right)<0$.\\

If the quadratic $q(x)$ has no real roots, then it is always positive, which completes the proof.\\

Assume $\alpha<\beta$ are roots of $q(x)=0$. Note that since $\alpha+\beta=a$ and $\alpha\beta=-b$ are both positive, $\alpha$ and $\beta$ are positive. In order to have $q\left(\dfrac{A_{n+1}}{A_n}\right)>0$, we need $\dfrac{A_{n+1}}{A_n}>\beta$ or $\dfrac{A_{n+1}}{A_n}<\alpha$. We know there are constants $c_1, c_2$ for which $A_n=c_1\alpha^n+c_2\beta^n$, for all $n$.\\

By assumption $q(A_1)>0$, which implies $A_1<\alpha$ or $A_1>\beta$.\\

{\it Case 1.} $A_1<\alpha$. We will show that $\dfrac{A_{n+1}}{A_n}<\alpha$ for all $n$. This is equivalent to $c_1\alpha^{n+1}+c_2\beta^{n+1}<c_1\alpha^{n+1}+c_2\beta^n\alpha$. Simplifying we obtain $c_2 \beta^{n+1}<c_2\beta^n\alpha$. Since $\beta$ is positive, this simplifies to $c_2\beta<c_2\alpha$ or $0<c_2(\alpha-\beta)$. On the other hand $A_0=c_1+c_2=1$, and $A_1=c_1\alpha+c_2\beta<\alpha$, which implies $c_2\beta<(1-c_1)\alpha$ or $c_2(\beta-\alpha)<0$, which completes the proof for this case.\\

{\it Case 2.} $A_1>\beta$. We will show that $\dfrac{A_{n+1}}{A_n}>\beta$ for all $n$. This is equivalent to $c_1\alpha^{n+1}+c_2\beta^{n+1}>c_1\alpha^n\beta+c_2\beta^{n+1}$. Simplifying we obtain $c_1 \alpha^{n+1}>c_1\alpha^n\beta$, which is equivalent to $c_1\beta<c_1\alpha$. Note that $A_1=c_1\alpha+c_2\beta>\beta$ implies $c_1\alpha >(1-c_2)\beta$, which is equivalent to $c_1\alpha>c_1\beta$, as desired.

Now, assume $\alpha=\beta.$ Thus, $A_n=c_1\alpha^n+c_2n\alpha^n.$ Since $A_n=c_1\alpha^n+c_nn\alpha^n=(c_1+nc_2)\alpha^n$ is non-negative for all $n$ and $\alpha$ is positive we must have $c_1+nc_2>0$ for all $n$, which implies $c_2\geq 0.$ Note that $A_0=1=c_0$. Since $q(A_1)\neq 0$ we have $A_1\neq \alpha$ which means $c_1\alpha+c_2\alpha\neq \alpha$ or $c_2\neq 0.$ Thus, $c_2>0.$\\

Note that $\dfrac{A_{n+1}}{A_n}=\dfrac{c_1+(n+1)c_2}{c_1+nc_2}\alpha=\alpha+\dfrac{c_2\alpha}{c_1+nc_2}>\alpha$. This completes the proof.
\end{proof}

\section{Acknowledgements}
This paper and the research behind it would not have been possible without the support of my mentor, Dr. Roohollah Ebrahimian, Senior Lecturer, Department of Mathematics, University of Maryland, USA under whom I have worked for the past year. I would also like to thank my parents for continuously encouraging me and providing me with the resources I required.

\end{document}